\let\mathcal\mathscr
\numberwithin{equation}{section}
\newtheorem{theorem}{Th\'eor\`eme}[section]
\newtheorem{lemma}[theorem]{Lemme}
\newtheorem{cor}[theorem]{Corollaire}
\theoremstyle{definition}
\newtheorem*{ack}{Remerciements}
\renewcommand{\phi}{\varphi}
\newcommand{\PP}{\mathbb{P}}
\renewcommand{\AA}{\mathbb{A}}
\newcommand{\ZZ}{\mathbb{Z}}
\newcommand{\NN}{\mathbb{N}}
\newcommand{\QQ}{\mathbb{Q}}
\renewcommand{\leq}{\leqslant}
\renewcommand{\geq}{\geqslant}
\renewcommand{\bar}{\overline}
\newcommand{\x}{\mathbf{x}}
\newcommand{\q}{\mathbf{q}}
\renewcommand{\b}{\mathbf{b}}
\renewcommand{\a}{\mathbf{a}}
\renewcommand{\r}{\mathbf{r}}
\renewcommand{\d}{\mathbf{d}}
\newcommand{\ve}{\varepsilon}
\newcommand{\bve}{\boldsymbol{\varepsilon}}
\DeclareMathOperator{\Pic}{Pic}
\DeclareMathOperator{\Gal}{Gal}
\DeclareMathOperator{\Br}{Br}
\DeclareMathOperator{\Mod}{mod}
\renewcommand{\bmod}[1]{\,(\Mod{#1})}
\renewcommand{\rho}{\varrho}
\newcommand{\Nglob}{N_{\mathrm{glob}}}
\newcommand{\NBr}{N_{\mathrm{Br}}}
\newcommand{\8}{\bmod{8}}
\newcommand{\2}{\bmod{2}}
\renewcommand{\=}{\equiv}
\begin{document}
\title[
Contre-exemples au principe de Hasse
]{Contre-exemples au principe de Hasse
pour certains tores coflasques}

\begin{abstract} Nous \'etudions le comportement asymptotique du 
nombre de vari\'et\'es dans une certaine classe ne satisfaisant pas 
le principe de Hasse. Cette \'etude repose sur des r\'esultats 
r\'ecemment obtenus par Colliot-Th\'el\`ene \cite{web}.
\end{abstract}

\author{R.\ de la Bret\`eche}
\address{
Institut de Math\'ematiques de Jussieu\\
Universit\'e  Denis Diderot\\
Case Postale 7012\\
F-75251 Paris CEDEX 13\\ France}
\email{breteche@math.jussieu.fr}

\author{T.D.\ Browning}
\address{School of Mathematics\\
University of Bristol\\ Bristol\\ BS8 1TW\\ United Kingdom}
\email{t.d.browning@bristol.ac.uk}

\thanks{2010  {\em Mathematics Subject Classification.} 11D99 (11G35, 14G05)}

\maketitle

\section{Introduction}

Nous nous int\'eressons \`a la fr\'equence de contre-exemples au 
principe de Hasse dans une famille  de vari\'et\'es
alg\'ebriques d\'efinies sur $\QQ$.
Les courbes de degr\'e  $3$ dans $\PP_\QQ^2$
sont l'objet du travail de Bhargava \cite{msri}. Le cas des surfaces 
de Ch\^atelet a \'et\'e r\'ecemment \'etudi\'e par La  Bret\`eche et 
Browning \cite{chat}.

Le but de cet article est de faire de m\^eme pour les vari\'et\'es
affines $Y\subset \AA_\QQ^6$,  d\'efinies par
\begin{equation}\label{eq:Y}
(x^2-ay^2)(z^2-bt^2)(u^2-abw^2)=c,
\end{equation}
avec $a,b,c\in \QQ^*$.  L'arithm\'etique de $Y$ a \'et\'e
\'etudi\'ee par Colliot-Th\'el\`ene \cite[\S 5]{web}, qui a notamment montr\'e
que le  choix de coefficients
$(a,b,c)=(13,17,5)$ donne un contre-exemple au principe de Hasse.
Notre investigation quantitative
est fond\'ee sur son travail.

La vari\'et\'e $Y$ est un espace principal homog\`ene du tore coflasque
$$
(x^2-ay^2)(z^2-bt^2)(u^2-abw^2)=1.
$$
D'apr\`es un r\'esultat de  Sansuc \cite[Cor.~8.7]{sansuc}, l'obstruction
Brauer--Manin est la seule obstruction au principe de Hasse.
Soit $Y^c$ une $\QQ$-compactification lisse de $Y$ et
$\bar{Y^c}=Y^c\times_{\QQ} \bar{\QQ}.$
Une caract\'eristique int\'eressante de $Y$ est le fait qu'il existe 
un  g\'en\'erateur
universel explicite pour le groupe de Brauer
$\Br(Y^c)/\Br(\QQ)=H^1(\Gal(\bar\QQ/\QQ), \Pic(\bar{Y^c}))$.

En fait,
suite \`a  \cite[Thm.~4.1]{web},
si  $a,b,ab\in \QQ^*\smallsetminus {\QQ^*}^2$
on a $\Br(Y^c)/\Br(\QQ)=\ZZ/2\ZZ$ avec
l'alg\'ebre de quarternions
$(x^2-ay^2, b)\in
\Br(\QQ(Y))$ comme g\'en\'erateur,
tandis que, si
l'un des $a,b,ab$ est dans
${\QQ^*}^2$, $Y$ est $\QQ$-rationnelle, et donc
$\Br(Y^c)/\Br(\QQ)=0$.
Nous  utilisons cette description explicite pour
d\'eterminer la fr\'equence \`a laquelle
il existe des contre-exemples au principe de Hasse pour les
vari\'et\'es \eqref{eq:Y}.

Nous  param\'etrons les  vari\'et\'es $Y$ par  l'ensemble
\begin{equation}\label{eq:S}
S=\{(a,b,c)\in (\ZZ\smallsetminus \{0\})^3:  \mbox{$a,b,c$ sans 
facteur carr\'e et $c>0$}\}.
\end{equation}
Il est \'evident  que toute $\QQ$-vari\'et\'e
\eqref{eq:Y} est
$\QQ$-isomorphe \`a la vari\'et\'e d\'efinie par la m\^eme \'equation 
avec $(a,b,c)\in S$.
Notre int\'er\^et principal est de d\'eterminer la r\'epartition des 
\'el\'ements de $S$ tels que $Y(\QQ)$ est vide (ou non-vide).
  \`A la lumi\`ere de  \cite[Prop.~5.1(a)]{web}, pour toute place $v$ 
de $\QQ$ et chaque  $(a,b,c)\in S$, on a
$Y(\QQ_v)\neq \emptyset$.
Il n'y a donc jamais d'obstruction locale pour l'existence de $\QQ$-points.

Soit $S(P)=\{(a,b,c)\in S: \max\{|a|,|b|,c\}\leq P\}$,
pour $P\geq 1$. Nous estimons asymptotiquement,
lorsque $P$ tend vers l'infini, le cardinal
$$
\NBr (P)=
\#\{(a,b,c)\in S(P): Y(\QQ) = \emptyset\}.
$$
Notre r\'esultat principal est le suivant.

\begin{theorem}\label{main}
Lorsque $P\geq 2$, on a
$$
\NBr (P)=
\frac{\tau_1P^3}{\log P}
-
\frac{\tau_2P^3}{(\log P)^{\frac{3}{2}}}
   +O \left(\frac{P^3}{(\log P)^{2}} \right),
$$
o\`u
\begin{align*}
\tau_1&=
\frac{45 }{  \pi^5} \prod_{p>2}\Big( 1+\frac{1}{
2p(p+1)}\Big)+\frac{15 }{ \pi^5} \prod_{\substack{p>2}}\Big(
1+\frac{(-1)^{\frac{p-1}{2}}}{ 2p( p+1) }\Big), \\
   \tau_2
   &=\frac{153}{ 16\pi^{\frac{7}{2}}} \prod_{p}   \frac{(1- \frac{1}{ p
})^{\frac{1}{2}}}{(1+ \frac{1}{ p })}
\Big( 1+\frac{3}{ 2p }+\frac{1}{  p^2 }\Big)
.\end{align*}
\end{theorem}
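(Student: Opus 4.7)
The plan is to translate the condition $Y(\QQ)=\emptyset$ into explicit arithmetic constraints on $(a,b,c)$ and then to count such triples in $S(P)$ by analytic means. From the discussion preceding the theorem, $Y(\QQ)=\emptyset$ can only occur when $a,b,ab \notin {\QQ^*}^2$, in which case the sole obstruction is the Brauer--Manin obstruction attached to the class $\mathcal{A}=(x^2-ay^2,b) \in \Br(Y^c)/\Br(\QQ)$. Since $Y(\QQ_v)\neq\emptyset$ at every place, the variety $Y$ fails the Hasse principle if and only if
$$
\sum_v \mathrm{inv}_v \mathcal{A}(P_v) = \tfrac{1}{2} \qquad \text{for every } (P_v)\in Y(\mathbf{A}_\QQ).
$$
I would begin by computing these local invariants in terms of Hilbert symbols and local data for $(a,b,c)$, building on the explicit description of $Y(\QQ_v)$ in \cite[\S 5]{web}. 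The expected outcome is that the condition on $(a,b,c)$ reduces to a combination of congruences modulo a bounded power of $2$ and conditions of the form $\bigl(\tfrac{a}{p}\bigr)=\pm 1$, $\bigl(\tfrac{b}{p}\bigr)=\pm 1$ at odd primes $p \mid abc$.

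Next, I would rewrite
$$
\NBr(P) = \sum_{(a,b,c)\in S(P)} \mathbf{1}_{\mathcal{C}}(a,b,c),
$$
where $\mathcal{C}$ is the set of local conditions identified above, and carry out the inner summation. The natural order is to fix $(a,b)$ and sum over $c$: up to bounded primes, the conditions force the prime factors of $c$ to satisfy prescribed splitting behaviour in the biquadratic extension $\QQ(\sqrt a,\sqrt b)$, so the inner sum factors through the quadratic Dirichlet characters $\chi_a,\chi_b,\chi_{ab}$. A first application of the Selberg--Delange method then produces an asymptotic of the shape $c_{a,b}\, P/\log P$ for the sum over $c\leq P$, with explicit local densities at every prime. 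Summing over $(a,b)$ with $|a|,|b|\leq P$ gives the expected order $P^3/\log P$ for the main term.

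The crucial final step is the extraction of the secondary term. Summing the leading coefficients $c_{a,b}$ over $(a,b)$ involves averages of multiplicative functions that depend on Legendre symbols at primes dividing $ab$; the resulting Dirichlet series in $(a,b)$ behaves, near $s=1$, like a negative half-power of $\log s$. A second application of Selberg--Delange, this time with exponent $\tfrac12$, then produces the secondary contribution of size $P^3/(\log P)^{3/2}$ and explains the shape of the Euler product defining $\tau_2$, in particular the factor $(1-1/p)^{1/2}$.

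The principal difficulties will be threefold: (i) the bookkeeping at $p=2$, where the quaternion algebra is most delicate and the finite list of admissible residues modulo $8$ for $(a,b,c)$ must be enumerated with care; (ii) the treatment of sign conditions on $a,b$, which together with the archimedean invariant will pin down the constant $153/(16\pi^{7/2})$ appearing in $\tau_2$; and (iii) securing a uniform error term $O(P^3/(\log P)^2)$, which will require quantitative zero-free regions for $L$-functions of real characters and Siegel-type estimates, following closely the approach developed in \cite{chat}.
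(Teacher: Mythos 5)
Your opening reduction (Colliot-Thélène's results, Hilbert symbols, congruences modulo $8$ plus Legendre-symbol conditions at odd primes) matches the paper's, but two of your analytic steps are wrong in ways that matter. First, the translation of the condition on $c$, and with it the bookkeeping of logarithms. For fixed $(a,b)$ the correct admissibility condition is local, not global: failure of the Hasse principle forces, at \emph{every} prime $p$, at least one of $a,b,ab$ to be a square in $\QQ_p^*$ (the paper's condition $f(a,b)=1$), and then, by \cite[Prop.~5.1(d)]{web}, $Y(\QQ)=\emptyset$ is equivalent to the single parity condition $\prod_{p:\,a\notin{\QQ_p^*}^2}(c,b)_p=-1$. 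This is a condition on a product of Hilbert symbols, not a prescription of the splitting of each prime factor of $c$ in $\QQ(\sqrt a,\sqrt b)$. Consequently, for fixed admissible $(a,b)$, asymptotically \emph{half} of the squarefree $c\le P$ give counterexamples: the $c$-sum has order $P$, not $P/\log P$ as you claim. The factor $1/\log P$ in the main term comes instead from the sparsity of the pairs with $f(a,b)=1$ (each odd prime dividing $ab$ imposes a quadratic-residue condition of density $1/2$, so that $\sum_{|a|,|b|\le P}\mu^2(a)\mu^2(b)f(a,b)\asymp P^2/\log P$), and the secondary term comes from the average of the oscillating product $h(a,b,c)$ over all three variables; that is, from the decomposition \eqref{eq:NBr}, $\NBr(P)=\tfrac12(N_1(P)-N_2(P))$, where $N_1$ yields $\tau_1 P^3/\log P$ and $N_2$ yields the $(\log P)^{-3/2}$ term. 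Your accounting (full density for the $(a,b)$-sum, a factor $1/\log P$ from the $c$-sum, then an extra half power of $\log$ from summing the constants $c_{a,b}$) is inconsistent with this structure, so your route to $\tau_2$ does not correspond to the actual source of the secondary term and could not produce the stated constants.

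Second, the analytic engine. You propose to secure the error term via ``quantitative zero-free regions for $L$-functions of real characters and Siegel-type estimates''. The real characters arising here, e.g. $n\mapsto\left(\frac{n}{k}\right)$ with $k$ a divisor of $a$ or $b$, have conductors as large as $P$, and no unconditional zero-free region or Siegel-type bound gives savings for individual characters in that range. The indispensable ingredient, absent from your plan, is the Friedlander--Iwaniec double-oscillation estimate for bilinear forms in Jacobi symbols (Lemme~\ref{l:filem2}): after applying quadratic reciprocity, it disposes of all ranges in which both entries of a symbol $\left(\frac{n}{m}\right)$ exceed $V=(\log P)^{B}$, and only then can the Siegel--Walfisz-type result (Lemme~\ref{l:ficor2}, used through the Corollaire~\ref{c:estQ}), whose error term is polynomial in the conductor, be applied in the complementary ranges where one variable is at most $V$. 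Without this small/large splitting and the bilinear estimate, the character sums with conductors of size up to $P$ are untreatable, and neither the main terms nor the error term $O(P^3/(\log P)^2)$ can be reached by Selberg--Delange-type arguments alone.
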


La diff\'erence
$\Nglob (P)=\#S(P)-\NBr(P)$ est le nombre de vari\'et\'es $Y$ param\'etr\'ees
par~$S(P)$ pour lesquelles   $Y(\QQ)\neq \emptyset$. Le cardinal 
$\#S(P)$ \'etant facile  \`a estimer, nous obtenons le r\'esultat 
suivant.

\begin{cor}
Lorsque $P\geq 2$, on a
\begin{align*}
\Nglob(P)
&= \#S(P) +O\left(\frac{P^3}{\log P}\right)\\
&= \frac{864}{\pi^6}P^3 +O\left(\frac{P^3}{\log P}\right).
\end{align*}
En particulier, on a une proportion asymptotique  de
  100\% des vari\'et\'es Y qui ont des $\QQ$-points.
\end{cor}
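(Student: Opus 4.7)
The plan is to reduce the corollary to a routine count of squarefree integers, once one invokes Theorem~\ref{main} as a black box. By definition,
$$
\Nglob(P)=\#S(P)-\NBr(P),
$$
and Theorem~\ref{main} provides $\NBr(P)=\tau_1 P^3/\log P+O(P^3/(\log P)^{3/2})$. In particular $\NBr(P)=O(P^3/\log P)$, which gives the first displayed equality directly.

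For the second equality, I would compute $\#S(P)$ explicitly. Recalling \eqref{eq:S}, the set $S$ consists of triples $(a,b,c)$ with $a,b\in\ZZ\smallsetminus\{0\}$ squarefree and $c\in\ZZ_{>0}$ squarefree, and $S(P)$ imposes $|a|,|b|,c\le P$. Since $a$, $b$, and $c$ vary independently, $\#S(P)$ factors as the product of the count of nonzero squarefree integers in $[-P,P]$ (occurring twice) and the count of positive squarefree integers in $[1,P]$.

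The standard estimate gives
$$
\#\{n\in\ZZ:1\le n\le P,\ n\text{ sans facteur carr\'e}\}=\frac{6}{\pi^2}P+O(\sqrt{P}),
$$
by M\"obius inversion on the squarefull part. Hence the count over $[-P,P]\smallsetminus\{0\}$ is $\tfrac{12}{\pi^2}P+O(\sqrt{P})$, and multiplying yields
$$
\#S(P)=\Bigl(\frac{12}{\pi^2}\Bigr)^2\cdot\frac{6}{\pi^2}\,P^3+O(P^{5/2})=\frac{864}{\pi^6}P^3+O(P^{5/2}).
$$
Combined with the bound on $\NBr(P)$, this gives the claimed asymptotic for $\Nglob(P)$, and the ratio $\NBr(P)/\#S(P)=O(1/\log P)\to 0$ furnishes the 100\% statement.

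There is no genuine obstacle here: the corollary is essentially a bookkeeping consequence of Theorem~\ref{main}, since the size of $\NBr(P)$ is already a factor of $\log P$ smaller than the total count. The only mild point of care is keeping track of the sign of $a,b$ (which doubles the count for these two variables) versus the positivity of $c$.
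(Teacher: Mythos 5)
Your proof is correct and follows exactly the route the paper intends: the paper itself only remarks that $\#S(P)$ is ``facile \`a estimer'' and subtracts the bound $\NBr(P)=O(P^3/\log P)$ from Th\'eor\`eme~\ref{main}, which is precisely your argument. Your explicit computation $\#S(P)=\bigl(\tfrac{12}{\pi^2}P+O(\sqrt{P})\bigr)^2\bigl(\tfrac{6}{\pi^2}P+O(\sqrt{P})\bigr)=\tfrac{864}{\pi^6}P^3+O(P^{5/2})$, including the sign doubling for $a,b$ versus the positivity of $c$, correctly supplies the details the paper leaves implicit.
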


\begin{ack}
Pendant l'\'elaboration de cet article, le premier auteur a \'et\'e 
soutenu par un
{\em IUF junior} et le {\em projet ANR (PEPR)}, tandis que le second
auteur a \'et\'e soutenu par la bourse {\em ERC 306457}.
\end{ack}

\section{L'obstruction  Brauer--Manin}

Nous rappelons quelques points cl\'es
du travail de Colliot-Th\'el\`ene
\cite{web}, sur les vari\'et\'es $Y$ d\'efinies en  \eqref{eq:Y}, lorsque
$(a,b,c)$
appartient \`a l'ensemble $S$ d\'efini en \eqref{eq:S}.

Selon  \cite[Prop.~5.1(c)]{web}, on a $Y(\QQ)\neq\emptyset$
s'il existe un nombre premier $p$ tel qu'aucun  des~$a,b, ab$ ne soit
pas un carr\'e dans $\QQ_p^*$.
Supposons que pour chaque premier $p$
l'un au moins des $a,b$ ou $ab$ est un carr\'e
dans  $\QQ_p^*$, alors il d\'ecoule de
\cite[Prop.~5.1(d)]{web} que $Y(\QQ)=\emptyset$ si, et seulement si,
$$
\sum_{\substack{p\\ a\not\in {\QQ_p^*}^2}} [c,b]_p\equiv 1 \bmod 2.
$$
Ici  $[\cdot,\cdot]_p:\QQ_p^*\times \QQ_p^*\rightarrow \ZZ/2\ZZ$ est 
d\'efini par
$(\cdot,\cdot)_p=(-1)^{[\cdot,\cdot]_p}$, o\`u $(\cdot,\cdot)_p$ est
le symbole de   Hilbert.

Lorsque $(a,b,c)\in \ZZ^3$, nous consid\'erons
$$
f(a,b)=\begin{cases}1, &\mbox{si
$a,b$ ou $ab$ est dans  ${\QQ_p^*}^2$ pour tout $p$,}\\
0, & \mbox{sinon},
\end{cases}
$$
et
\begin{equation}\label{defh}
h(a,b,c)=\prod_{\substack{p\\ a\not\in {\QQ_p^*}^2}} (c,b)_p.
\end{equation}
Notre probl\`eme est donc d'\'evaluer, lorsque $P$ tend vers
l'infini, la quantit\'e
\begin{equation}\label{eq:NBr}
\begin{split}
\NBr(P)
&=\sum_{(a,b,c)\in S(P)} \frac{f(a,b)}{2} \big(1-h(a,b,c)\big)
\\
&=
\frac{1}{2}\left(N_1(P)-N_2(P)\right),
\end{split}
\end{equation}
o\`u les d\'efinitions de
$N_1(P)$ et $N_2(P)$
sont \'evidentes.
Notre analyse de
$N_1(P)$ et $N_2(P)$
est inspir\'ee du travail  de Friedlander et Iwaniec \cite{fi}.

Nous commen\c cons avec l'observation
$$
N_1(P)
=\sum_{(a,b,c)\in S(P)} f(a,b)
=\left(\sum_{
\substack{
(a,b)\in \ZZ^2\\
|a|,|b|\leq P
}}  \mu^2(a)\mu^2(b)f(a,b)\right) \left(\sum_{1\leq c\leq P}\mu^2(c)\right),
$$
o\`u $\mu$ est la fonction de M\"obius.
Nous \'etendons la d\'efinition de la fonction $\mu$ de telle sorte que
$\mu(0)=0$.
Le deuxi\`eme facteur est facile \`a estimer. Il vient
\begin{equation}\label{eq:N1}
N_1(P)
=\frac{6P}{\pi^2}
\sum_{
\substack{
(a,b)\in \ZZ^2\\
|a|,|b|\leq P
}}  \mu^2(a)\mu^2(b)f(a,b)
+O(P^{\frac{5}{2}}).
\end{equation}

Il est clair que l'un au moins
des $a,b$ ou $ab$ est un carr\'e dans ${\QQ_p^*}$ pour chaque premier 
$p\nmid 2ab$.
Quand  $p=2$ et  $ab$ est impair, la condition relative \`a $p=2$ 
contenue dans $f(a,b)$
est que l'un au moins des
$a,b$ or $ab$ est congru \`a $1$ modulo $8$.
Rappelant que $a,b$ sont des entiers sans facteur carr\'e, nous avons 
alors l'\'egalit\'e
\begin{equation}\label{f=gprodp}\begin{split}
f(a,b)=~&
f_2(a,b)\prod_{\substack{p\mid a\\ p\nmid 2b}}
\frac{1}{2}\left(1+\left(\frac{b}{p}\right)\right)
\prod_{\substack{p\mid b\\ p\nmid 2a}}
\frac{1}{2}\left(1+\left(\frac{a}{p}\right)\right)
\\&\times \prod_{\substack{p\mid  \gcd(a,b)\\ p\neq 2}}
\frac{1}{2}\left(1+\left(\frac{ab/(a,b)^2}{p}\right)\right),\end{split}
\end{equation}
avec
$$
f_2(a,b)=\begin{cases}
1, & \mbox{si  $2\nmid ab$ et $1 \in \{a,b, ab\}\8$,}\\
1, & \mbox{si  $2\mid a$ et $b\equiv 1\8$,}\\
1, & \mbox{si  $2\mid b$ et $a\equiv 1\8$,}\\
1, & \mbox{si  $2\mid (a,b)$,   $ab\equiv 4\bmod {32}$,}\\
0, & \mbox{sinon}.
\end{cases}
$$

   Avec $\vartheta$ d\'efini sur les impairs par $\vartheta(k)=1$ si
$k\equiv 3\bmod 4$ et $\vartheta(k)=0$ si $k\equiv 1\bmod 4$, la loi
de r\'eciprocit\'e quadratique s'\'enonce, lorsque $k$ et $\ell$ sont 
des nombres entiers impairs  premiers entre eux, sous la forme
\begin{equation}\label{loiquad}
\left(\frac{k}{\ell}\right)\left(\frac{\ell}{k}\right)=(-1)^{\vartheta(k)\vartheta(\ell)}.
\end{equation}
Nous terminons cette section par quelques mots sur les symboles  de Hilbert
  (cf.\ \cite[\S 3]{serre}).
Soit $p$ un nombre premier et  $x,y\in \QQ_p^*$.
Supposant que $x=p^{\xi}u$ et $y=p^{\eta}v$, o\`u $u,v $ sont des 
$p$-unit\'es, alors pour tout  $p>2$ nous avons
\begin{equation}\label{eq:hilbert-p}
(x,y)_p=(-1)^{\xi\eta\vartheta(p)} \left(\frac{u}{p}\right)^\eta
\left(\frac{v}{p}\right)^\xi,
\end{equation}
tandis que lorsque $p=2$, nous avons
\begin{equation}\label{eq:hilbert-2}
(x,y)_2=(-1)^{ \vartheta(u )\vartheta(v )  +\frac{\xi(v^2-1)}{8}
+\frac{\eta(u^2-1)}{8}}.
\end{equation}

\section{Lemmes techniques}\label{s:lemmes}

Lorsque $\nu\in \ZZ_{\geq 0}$, nous consid\'erons
$$
\phi_\nu(r)=\prod_{p\mid r} \Big(1+\frac{\nu}{ 2
p}\Big)^{-1}.
$$
Nous aurons besoin de \cite[Cor.~2]{fi}, concernant
$$
C_\nu(x;a,d,r,q,\chi)=\sum_{\substack{
n\leq x\\
   (n ,d)=1\\
n\equiv a\bmod {r}
}} \frac{ \mu^2(n)\chi(n)  }{2^{\omega(n)}}\phi_\nu(n),
$$
o\`u $\chi$ est un caract\`ere modulo $q$ et $\omega(n)$ d\'esigne le 
nombre de facteurs premiers distincts de~$n$.
Posons
$$
c_\nu(r)=
c_\nu(1)\prod_{p\mid r } \Big(1+\frac{\phi_\nu(p)}{2p}\Big)^{-1} ,
$$
avec
$$
c_\nu(1)=\frac{1}{\sqrt{\pi}}\prod_{p }
\Big(1+\frac{\phi_\nu(p)}{2p}\Big)\Big(1-\frac{1}{p}\Big)^{\frac{1}{2}}.
$$
Notons aussi  $\delta$ la fonction caract\'eristique des caract\`eres
principaux.

\begin{lemma}\label{l:ficor2} Soient $A>0$ fix\'e et $\nu\in \{0,1,2\}.$
Lorsque $a,$ $d,$ $r$, $q$ sont des entiers satisfaisant $(a ,r)=(
d,rq)=(r,q)=1$, $x\geq 2$ et $\chi$ est un caract\`ere modulo $q$,
on a
\begin{align*}
C_\nu(x;a,d,r,q,\chi)
&=  \delta(\chi) \frac{c_\nu(drq)}{\phi(r)}\frac{x}{\sqrt{\log
x}}\Big\{1+O\left(\frac{(\log 3drq)^{\frac{3}{2}}}{\log x}\right)\Big\}
+  O\left(\frac{\tau(d)rqx}{(\log x)^{A}}\right).
\end{align*}
\end{lemma}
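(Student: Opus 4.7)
The plan is to reduce the statement to the Friedlander--Iwaniec estimate \cite[Cor.~2]{fi}, which treats essentially the same sum without the congruence constraint modulo $r$. First, I would detect the congruence $n\equiv a\bmod r$ using orthogonality of Dirichlet characters modulo $r$: since $(a,r)=1$ and $\psi(n)=0$ when $(n,r)>1$,
$$
C_\nu(x;a,d,r,q,\chi)=\frac{1}{\phi(r)}\sum_{\psi\bmod r}\bar\psi(a)\sum_{\substack{n\leq x\\(n,d)=1}}\frac{\mu^2(n)(\chi\psi)(n)\phi_\nu(n)}{2^{\omega(n)}}.
$$
Because $(q,r)=1$, the product $\chi\psi$ is a Dirichlet character modulo $qr$ that is principal if and only if $\chi$ and $\psi$ are simultaneously principal.

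Next, I would apply \cite[Cor.~2]{fi} to each inner sum. When $\chi\psi$ is non-principal modulo $qr$, the estimate contributes only an error of size $O(\tau(d)\,qr\,x/(\log x)^A)$. The sole main term therefore arises from the pair $\chi=\chi_0$, $\psi=\psi_0$, for which $\bar\psi_0(a)=1$. The combined coprimality conditions $(n,d)=1$, $(n,r)=1$ (enforced by $\psi_0$), and $(n,q)=1$ (enforced by $\chi_0$) remove the Euler factors at primes $p\mid drq$ from the product defining $c_\nu(1)$, producing exactly the constant
$$
c_\nu(drq)=c_\nu(1)\prod_{p\mid drq}\Big(1+\frac{\phi_\nu(p)}{2p}\Big)^{-1}
$$
given in the statement. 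Dividing by $\phi(r)$ and collecting the contributions yields the claimed asymptotic with relative error $(\log 3drq)^{3/2}/\log x$.

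The main obstacle is the careful bookkeeping of the error terms. The constraint $(n,d)=1$ must be injected into the inner sum---typically via the M\"obius expansion $\mathbf{1}_{(n,d)=1}=\sum_{e\mid(n,d)}\mu(e)$---which accounts for the $\tau(d)$ factor in the stated error. One must also verify that \cite[Cor.~2]{fi} is uniform in the modulus with linear dependence on the conductor $qr$, so that after summation over the $\phi(r)$ characters $\psi\bmod r$ the total error remains $O(\tau(d)\,rq\,x/(\log x)^A)$, and that the implied constants are independent of $a$. Granted this uniformity, both the main term and the secondary relative error follow directly from the Friedlander--Iwaniec asymptotic applied to the principal pair $(\chi_0,\psi_0)$.
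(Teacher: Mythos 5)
Your reduction is essentially correct, but it takes a genuinely different route from the paper's, and it starts from a misreading of the reference. The paper's proof is a one-line citation: \cite[Cor.~2]{fi} already treats the sum \emph{with} the congruence condition $n\equiv a\bmod{r}$, the coprimality condition $(n,d)=1$ and the character $\chi$ modulo $q$; it is literally the case $\nu=0$ of the lemma, proved there under the superfluous extra hypothesis $(a,q)=1$. So your preliminary orthogonality step over the characters $\psi$ modulo $r$ is not needed to recover the congruence condition. That said, your route is sound and actually buys something the paper only asserts: since you invoke the reference exclusively with congruence modulus $1$, the hypothesis $(a,q)=1$ never enters, so your argument \emph{proves} that this condition is unnecessary, where the paper merely declares it ``inutile.'' Your bookkeeping is also in order: the principal pair $(\chi_0,\psi_0)$ yields $c_\nu(drq)x/(\phi(r)\sqrt{\log x})$ because the coprimality conditions at $p\mid drq$ strip exactly the corresponding Euler factors, and the at most $\phi(r)$ non-principal pairs, each applied with modulus $qr$ and error linear in the modulus (which is indeed how the Friedlander--Iwaniec bound is stated), give $O(\tau(d)rqx/(\log x)^A)$ after dividing by $\phi(r)$.

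The one point you pass over silently is the parameter $\nu$, and it is a genuine omission. \cite[Cor.~2]{fi} concerns the unweighted sum, i.e.\ $\nu=0$ only; your inner sums carry the weight $\phi_\nu(n)$, so for $\nu=1,2$ you cannot literally ``apply \cite[Cor.~2]{fi}'' to them. One must re-run the Friedlander--Iwaniec argument with the multiplicative weight $\phi_\nu$ --- harmless, since $\phi_\nu(p)=1+O(1/p)$ only perturbs the relevant Euler products by factors $1+O(p^{-2})$, whence the modified constants $c_\nu$ --- and this is precisely what the paper's phrase ``les cas $\nu=1,2$ se d\'emontrent de la m\^eme mani\`ere'' covers. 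Your proposal should state this extension explicitly: the cases $\nu=1,2$ are part of the lemma and are actually used later (the study of $N_2(P)$ applies the lemma with $\nu=2$), so without this remark your proof only establishes the case $\nu=0$.
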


\begin{proof}
Dans \cite[Cor.~2]{fi}, ce r\'esultat est d\'emontr\'e pour $\nu=0$, 
la condition
suppl\'ementaire $(a,q)=1$ \'etant inutile. Les cas $\nu=1,2$ se
d\'emontrent de la m\^eme mani\`ere.
\end{proof}

Soient $\x=(x_1,x_2)$, $\a=(a_1,a_2)$, $\d=(d_1,d_2)$, $\r=(r_1,r_2)$ 
et $\q=(q_1,q_2)$. Du Lemme~\ref{l:ficor2}, nous d\'eduisons 
l'estimation de la somme
$$
Q(\x;\a,\d,r,\q,\chi_1,\chi_2)=
\sum_{\substack{(n_1,n_2)\in \NN^2\\
n_i\leq x_i\\
   (n_i,d_i)=1, (n_1,n_2)=1\\
n_i\equiv a_i\bmod {r}
}}   \mu^2(n_1n_2)  \frac{  \chi_1(n_1)\chi_2(n_2)  }{2^{\omega(n_1n_2)}},
$$
o\`u les $\chi_i$
sont des caract\`eres modulo $q_i$.

\begin{cor}\label{c:estQ} Soit $A>0$ fix\'e.
Lorsque $a_1, a_2$, $d_1,$ $d_2,$ $r$, $q_1$, $q_2$ sont des entiers 
satisfaisant $(a_i,r )= ( d_i, q_i)=(r ,d_1d_2q_1q_2)=1$, $x_i\geq 2$ 
et $\chi_i$
sont des caract\`eres modulo $q_i$, on a
\begin{align*}
Q(\x;\a,\d,r,\q,\chi_1,\chi_2)=~&
  \delta(\chi_1) \delta(\chi_2) \frac{c(\d,r)}{\phi(r
)^2}\frac{x_1x_2}{\sqrt{\log x_1\log x_2}}\left\{1+O\left(\frac{(\log
3d_1d_2rq_1q_2)^{\frac{3}{2}}}{\log \min\{ x_1,x_2\}}\right)\right\}\\
&+  O\left(\frac{\tau(d_1d_2)rq_1q_2x_1x_2}{(\log \min\{
x_1,x_2\})^{A}}\right),
\end{align*}
avec
$$
c(\d,r)=
\frac{6}{\pi^3} \frac{ \phi_1(d_1r)
\phi_1(d_2r)}{\phi_1(d_1d_2r)^2}\phi_2(d_1d_2r) .
$$
\end{cor}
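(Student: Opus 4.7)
La strat\'egie consiste \`a d\'ecoupler la condition $(n_1,n_2)=1$ par inversion de M\"obius, puis \`a appliquer deux fois le Lemme~\ref{l:ficor2} aux sommes simples obtenues, et finalement \`a \'evaluer un produit eul\'erien en le param\`etre de d\'ecouplage $k$. On \'ecrira $\mathbf{1}_{(n_1,n_2)=1} = \sum_{k\mid(n_1,n_2)}\mu(k)$ et on posera $n_i = k m_i$ ; puisque $n_1 n_2$ est sans facteur carr\'e, $k$ l'est \'egalement et $(k,m_i)=1$, d'o\`u $\omega(n_1 n_2) = 2\omega(k) + \omega(m_1)+\omega(m_2)$ et $\chi_i(n_i) = \chi_i(k)\chi_i(m_i)$. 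Les conditions de coprimarit\'e et de congruence se traduisent par $(k,d_1 d_2 r q_1 q_2)=1$, $(m_i,d_i k)=1$ et $m_i \equiv a_i\bar k \bmod r$, ce qui donne
$$
Q = \sum_{\substack{k\\ (k,\,d_1 d_2 r q_1 q_2)=1}} \frac{\mu(k)\chi_1(k)\chi_2(k)}{4^{\omega(k)}}\prod_{i=1,2} C_0\bigl(x_i/k;\,a_i\bar k,\,d_i k,\,r,\,q_i,\,\chi_i\bigr).
$$

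On appliquera alors le Lemme~\ref{l:ficor2} avec $\nu=0$ \`a chacun des deux facteurs, dont les hypoth\`eses sont satisfaites par construction. Le terme principal ne subsistera que lorsque $\chi_1$ et $\chi_2$ sont tous deux principaux, fournissant le facteur $\delta(\chi_1)\delta(\chi_2)$ attendu. On tronquera la somme sur $k$ \`a $k\le K:=(\log\min\{x_1,x_2\})^B$ avec $B$ assez grand en fonction de $A$ : les termes d'erreur du Lemme s'additionneront en $O(\tau(d_1 d_2) r q_1 q_2 x_1 x_2(\log\min\{x_1,x_2\})^{-A})$, tandis que la contribution de la queue $k>K$ se majorera \`a l'aide de $\sum_{k>K} k^{-2}\ll K^{-1}$ appliqu\'ee \`a l'ordre de grandeur $x_1 x_2/\sqrt{\log x_1\log x_2}$ du terme principal.

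Pour le terme principal, l'identit\'e $c_0(m) = c_0(1)\phi_1(m)$ et la multiplicativit\'e de $\phi_1$ r\'eduiront la somme sur $k$, prolong\'ee \`a $+\infty$, au produit eul\'erien
$$
\phi_1(d_1 r q_1)\phi_1(d_2 r q_2) \prod_{p\nmid d_1 d_2 r q_1 q_2}\Bigl(1 - \frac{\phi_1(p)^2}{4p^2}\Bigr).
$$
L'identit\'e cl\'e $1 - \phi_1(p)^2/(4p^2) = \phi_1(p)^2/\phi_2(p)$, qui se v\'erifie directement, permettra de reformuler ce produit comme $\prod_p\phi_1(p)^2/\phi_2(p)$ divis\'e par les facteurs locaux aux premiers $p\mid d_1 d_2 r q_1 q_2$. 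Combin\'ee avec l'\'egalit\'e $c_0(1)^2\prod_p\phi_1(p)^2/\phi_2(p) = \pi^{-1}\prod_p(1-p^{-2}) = 6/\pi^3$, on obtiendra, apr\`es un examen prime par prime des recombinaisons, la constante annonc\'ee $c(\d,r)$.

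La principale difficult\'e r\'esidera dans cette derni\`ere \'etape : il faudra v\'erifier avec soin que les contributions locales en chaque premier divisant $d_1, d_2, r, q_1, q_2$ s'accordent avec la forme compacte $\phi_1(d_1 r)\phi_1(d_2 r)\phi_2(d_1 d_2 r)/\phi_1(d_1 d_2 r)^2$, en exploitant les hypoth\`eses de coprimarit\'e $(d_i,q_i)=1$ et $(r,d_1 d_2 q_1 q_2)=1$. Le traitement des termes d'erreur, bien qu'il demande un peu d'attention aux d\'ependances en $d_1, d_2, r, q_1, q_2$, restera de nature routini\`ere d\`es lors que $B$ est ajust\'e en fonction de $A$.
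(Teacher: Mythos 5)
Votre plan suit exactement la d\'emonstration du texte : m\^eme d\'ecouplage de la condition $(n_1,n_2)=1$ par inversion de M\"obius, conduisant \`a la m\^eme identit\'e
$$
Q=\sum_{\substack{k\geq 1\\ (k,d_1d_2rq_1q_2)=1}}\frac{\mu(k)\chi_1(k)\chi_2(k)}{4^{\omega(k)}}\,
C_0\Big(\frac{x_1}{k};a_1\bar{k},d_1k,r,q_1,\chi_1\Big)
C_0\Big(\frac{x_2}{k};a_2\bar{k},d_2k,r,q_2,\chi_2\Big),
$$
m\^eme troncature de la somme en $k$ \`a une puissance de $\log\min\{x_1,x_2\}$ avec majoration triviale de la queue, m\^eme double application du Lemme~\ref{l:ficor2} avec $\nu=0$, et m\^eme \'evaluation du terme principal par produit eul\'erien. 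Vos identit\'es $c_0(m)=c_0(1)\phi_1(m)$, $1-\phi_1(p)^2/(4p^2)=\phi_1(p)^2/\phi_2(p)$ et $c_0(1)^2\prod_p\phi_1(p)^2/\phi_2(p)=6/\pi^3$ sont exactes et \'equivalentes \`a celles du texte, qui \'ecrit $\phi_1(p)^2/(4p^2)=1/(2p+1)^2$.

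Un seul point ne se recolle pas : le r\^ole de $q_1,q_2$ dans la constante. En appliquant fid\`element le Lemme~\ref{l:ficor2}, vous \'ecrivez $c_0(d_ikrq_i)$, d'o\`u votre produit eul\'erien $\phi_1(d_1rq_1)\phi_1(d_2rq_2)\prod_{p\nmid d_1d_2rq_1q_2}\big(1-\phi_1(p)^2/(4p^2)\big)$, qui vaut
$$
\frac{6}{\pi^3}\,\phi_1(d_1rq_1)\,\phi_1(d_2rq_2)\,\frac{\phi_2(d_1d_2rq_1q_2)}{\phi_1(d_1d_2rq_1q_2)^2}
$$
et non $c(\d,r)$ : la v\'erification locale que vous annoncez ne peut donc pas aboutir \`a la constante de l'\'enonc\'e d\`es que $q_1q_2>1$. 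Par exemple, pour $d_1=d_2=r=1$, $q_2=1$ et $\chi_1$ principal modulo $q_1=3$, la condition $(n_1,3)=1$ s'absorbe dans $d_1$ et la vraie constante est $\frac{6}{\pi^3}\cdot\frac{\phi_2(3)}{\phi_1(3)}=\frac{7}{8}\cdot\frac{6}{\pi^3}$, alors que $c(\d,r)=\frac{6}{\pi^3}$. Votre calcul interm\'ediaire est donc correct, et c'est l'\'enonc\'e du corollaire qui, pour des caract\`eres principaux, n'est exact que lorsque $q_1=q_2=1$ ; le texte contourne ce point en \'ecrivant $c_0(d_1nr)c_0(d_2nr)$, c'est-\`a-dire en omettant d'office les $q_i$, ce qui est sans cons\'equence pour l'article puisque les caract\`eres qui y interviennent sont des symboles de Jacobi de modules sans facteur carr\'e, donc principaux seulement si $q_i=1$. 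Pour que votre preuve co\"{\i}ncide avec celle du texte, il suffit d'observer que le terme principal n'est \`a retenir que dans ce cas et de restreindre l'identification finale de la constante \`a $q_1=q_2=1$.
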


\begin{proof}
Notons $Q$ la quantit\'e \`a estimer. Une interversion
de M\"obius fournit
\begin{align*}Q=\!\!\!\!\!\!
\sum_{\substack{n\in\NN\\
(n,d_1d_2rq_1q_2)=1}}\!\!\!\!\frac{\mu(n)\chi_1(n)\chi_2(n)}{4^{\omega(n)}}
C_0\left(\frac{x_1}{n};n^{-1}a_1,d_1n,r 
,q_1,\chi_1\right)C_0\left(\frac{x_2}{n};n^{-1}a_2,d_2n,r
,q_2,\chi_2\right) .
\end{align*}
Nous pouvons restreindre la sommation aux entiers $n\leq T$ o\`u $T=(\log
\min\{ x_1,x_2\})^{\frac{A}{2}},$ la contribution compl\'ementaire \'etant
major\'ee par $O(x_1x_2/T)$.
Nous appliquons ensuite le  Lemme~\ref{l:ficor2} avec $\nu=0$. Il vient
\begin{align*}
Q  =~&
\delta(\chi_1) \delta(\chi_2)
\hspace{-0.4cm}
\sum_{\substack{n\leq T\\
(n,d_1d_2r)=1}}
\hspace{-0.4cm}
\frac{\mu(n)c_0(d_1nr )c_0(d_2nr)x_1x_2}{
4^{\omega(n)}n^2 \phi(q)^2\sqrt{\log x_1\log
x_2}}\left\{1+O\left(\frac{(\log 3d_1d_2 rq_1q_2)^{\frac{3}{2}}}{\log
(\min\{ x_1,x_2\})}\right)\right\}
\\
&+  O\left(\frac{\tau(d_1d_2)rq_1q_2x_1x_2T}{(\log (\min\{
x_1,x_2\}))^{A}}+\frac{x_1x_2}{T}\right)\\
=~&\delta(\chi_1)
\delta(\chi_2)\frac{c(\d,r)x_1x_2}{\phi(q )^2\sqrt{\log
x_1\log x_2}} \left\{1+O\left(\frac{(\log 3d_1d_2
rq_1q_2)^{\frac{3}{2}}}{\log \min\{ x_1,x_2\}}\right)\right\}\\
& +  O\left(\frac{\tau(d_1d_2)rq_1q_2x_1x_2 }{(\log \min\{
x_1,x_2\})^{\frac{A}{2}}} \right).
\end{align*}
Ici nous avons
   \begin{align*}c(\d,r)
   &=c_0(d_1 r ) c_0(d_2 r)
\sum_{\substack{n\in\NN\\
(n,d_1d_2r)=1}}\frac{\mu(n)\phi_1(n)^2}{  4^{\omega(n)}n^2}
\\
&=\frac{1}{\pi}  \phi_1(d_1r) \phi_1(d_2r)
\prod_{p\mid d_1d_2 r} \Big(1-\frac{1}{(2p+1)^2}\Big)^{-1}
\prod_{p}\Big(1-\frac{1}{p^2}\Big)
\\&=\frac{6}{\pi^3}  \phi_1(d_1r)
\phi_1(d_2r)\frac{\phi_2(d_1d_2r)}{\phi_1(d_1d_2r)^2} ,
\end{align*}
ce qui fournit donc le r\'esultat quitte \`a modifier la valeur du
param\`etre $A$.
\end{proof}

Lorsque $q_1q_2$ est grand, le  Corollaire \ref{c:estQ} est inutilisable.
Nous aurons besoin ainsi du r\'esultat suivant
\cite[Lemme~2]{fi}.

\begin{lemma}\label{l:filem2} Soient
$\{\alpha_m\}_{m\in \NN},$ $\{\beta_n\}_{n\in \NN}$ des suites de 
nombres complexes telles que $|\alpha_m|,|\beta_n|\leq 1$, dont le 
support est inclus dans les nombres impairs.
Lorsque $M,N\geq 1$, on a
\begin{align*}
\sum_{m\leq M}\sum_{n\leq N} \alpha_m\beta_n\left(\frac{n}{m}\right)\ll
(MN^{\frac{5}{6}
}+NM^{\frac{5}{6}
})(\log 3 MN)^{\frac{7}{6}
}.
\end{align*}
\end{lemma}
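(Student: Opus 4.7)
Cet \'enonc\'e est une estimation bilin\'eaire classique pour le symbole de Jacobi, emprunt\'ee \`a Friedlander-Iwaniec \cite[Lemme~2]{fi}; je me contente d'indiquer la strat\'egie g\'en\'erale que j'emploierais, les d\'etails techniques se trouvant dans la r\'ef\'erence cit\'ee.

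Le premier ingr\'edient serait la loi de r\'eciprocit\'e quadratique \eqref{loiquad}, qui donne $(n/m) = (-1)^{\vartheta(m)\vartheta(n)}(m/n)$ d\`es que $(m,n) = 1$. En d\'ecomposant les deux sommations selon les classes de congruence de $m$ et $n$ modulo $4$, le signe devient constant dans chaque sous-somme et peut \^etre absorb\'e dans les coefficients (toujours born\'es par~$1$); les r\^oles de $m$ et $n$ deviennent alors interchangeables, ce qui justifie la forme sym\'etrique de la borne et permet de supposer par exemple $M \leq N$, l'objectif \'etant alors d'\'etablir la majoration $MN^{5/6}(\log 3MN)^{7/6}$.

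Dans un second temps, j'appliquerais l'in\'egalit\'e de Cauchy-Schwarz dans la variable $m$ pour ramener le probl\`eme \`a l'estimation de
\begin{align*}
\sum_{m\leq M}\Bigl|\sum_{n\leq N}\beta_n\Bigl(\frac{n}{m}\Bigr)\Bigr|^{2}
= \sum_{n_1, n_2 \leq N}\beta_{n_1}\overline{\beta_{n_2}} \sum_{m \leq M} \Bigl(\frac{n_1 n_2}{m}\Bigr).
\end{align*}
La contribution \emph{diagonale}, issue des paires $(n_1, n_2)$ telles que $n_1 n_2$ soit un carr\'e parfait, se majore par $O(MN\log N)$, le nombre de telles paires dans $[1,N]^2$ \'etant d'ordre $N\log N$. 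Pour la contribution \emph{hors-diagonale}, je contr\^olerais la somme de caract\`eres incompl\`ete $\sum_{m\leq M}(n_1n_2/m)$, de conducteur au plus $4n_1n_2$, par P\'olya-Vinogradov ou, pour un gain suppl\'ementaire, par la borne de Burgess.

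Le principal obstacle sera l'obtention de l'exposant fin $5/6$, avec une perte purement polylogarithmique. Une combinaison na\"ive Cauchy-Schwarz + P\'olya-Vinogradov ne fournit que $M\sqrt{N} + \sqrt{M}N^{3/2}$, ce qui est trivial d\`es que $M\asymp N$. Pour gagner le facteur $N^{1/6}$ manquant tout en aboutissant au facteur logarithmique annonc\'e $(\log 3 MN)^{7/6}$, il faudrait raffiner l'argument, soit en rempla\c cant Cauchy-Schwarz par une in\'egalit\'e de H\"older d'ordre sup\'erieur combin\'ee \`a Burgess, soit en invoquant une variante du grand crible de Heath-Brown pour les caract\`eres r\'eels avec un suivi soigneux des puissances de logarithme. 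C'est pr\'ecis\'ement cette optimisation technique qui est effectu\'ee dans \cite[Lemme~2]{fi}.
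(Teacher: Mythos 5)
Your treatment coincides with the paper's own: the paper gives \emph{no} proof of this lemma but quotes it verbatim from Friedlander--Iwaniec \cite[Lemme~2]{fi}, which is exactly what your proposal falls back on. Your surrounding sketch is also accurate in its self-diagnosis: the odd-support hypothesis is indeed what makes the symmetrisation via \eqref{loiquad} legitimate, and your computation that Cauchy--Schwarz plus P\'olya--Vinogradov only yields $M\sqrt{N}+\sqrt{M}N^{3/2}$, hence nothing in the critical range $M\asymp N$, is correct.

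One warning about the two repairs you propose, since they are not as routine as your plan suggests. A H\"older--Burgess combination tends to make matters worse, not better: raising the inner sum to a higher power inflates the conductor of the resulting Jacobi character (up to $N^{2k}$) faster than Burgess's saving in the length $M$ can compensate, and one can check that no such direct combination reaches the exponent $5/6$. As for Heath-Brown's quadratic large sieve, it does give the stronger bound $(MN)^{\epsilon}\bigl(MN^{1/2}+NM^{1/2}\bigr)$, but with a genuine factor $(MN)^{\epsilon}$ and a squarefree-support restriction. In the present paper the lemma is applied with one bilinear variable possibly as small as $V=(\log P)^{B}$ while the other is of size close to $P$; the whole point of importing the weaker bound of \cite{fi} is precisely that its loss is an explicit power of $\log$, so that the saving $V^{-1/6}$ survives. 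Any loss of the shape $P^{\epsilon}$ would swamp that saving, and converting Heath-Brown's $\epsilon$'s into logarithms is a genuine piece of work, not bookkeeping. So the honest conclusion is the one you in fact draw: for this statement one should simply cite \cite[Lemme~2]{fi}, as the authors do.
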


\section{\'Etude de  $N_1(P)$}

Pour estimer  $N_1(P)$ \`a partir de \eqref{eq:N1}, nous consid\'erons
$$T(P)=\sum_{
\substack{
(a,b)\in \ZZ^2\\
|a|,|b|\leq P
}}  \mu^2(a)\mu^2(b)f(a,b) .
$$
Lorsque $\bve=(\varepsilon_1,\varepsilon_2)\in \{\pm 1\}^2$
et $\alpha,\beta\in \{0,1\}$, nous
  notons
  $T(P;  \alpha,\beta,\bve)$ la contribution dans
$T(P)$ des couples $(a,b)$ tels que $2^\alpha\parallel a$ et
$2^\beta\parallel b$,
$\varepsilon_1a>0, $ $ \varepsilon_2b>0$.
Les couples $(a2^{-v_2(a)},b2^{-v_2(b)})$ appartiennent \`a un
ensemble $E_{\alpha,\beta}$ modulo $8$,
avec
\begin{equation}\label{eq:set-E}
\begin{split}
E_{0,0}&=\{(1,\pm 1), (1,\pm 3), (-1,\pm 1), (\pm 3,1),  (\pm 3,\pm 3)\},\\
E_{1,0}&=\{(\pm 1,1), (\pm 3,1)\},\\
E_{0,1}&=\{(1,\pm 1), (1,\pm 3)\},\\
E_{1,1}&=\{(\pm 1,\pm1), (\pm 3,\pm 3)\}.
\end{split}
\end{equation}
Lorsque $\bve\in \{\pm 1\}^2$ et
     $(a2^{-\alpha},b2^{-\beta})\equiv (a_0,b_0)\bmod 8 $ avec
$(a_0,b_0)\in E_{\alpha,\beta}$, $\varepsilon_1a>0, $ et $
\varepsilon_2b>0$, la
formule \eqref{f=gprodp} s'\'ecrit aussi
\begin{equation}\label{calculfab}\mu^2(a)\mu^2(b)f(a,b) =   \sum_{
\substack{
(k,k',\ell,\ell',m,m')\in  \NN^6\\
a=\varepsilon_12^{\alpha} kk'mm'\\b=\varepsilon_22^\beta\ell \ell'mm'
}}\!\!\!
   \frac{ \mu^2(2kk'mm' \ell \ell' ) }{2^{\omega(kk' \ell \ell'mm')}}
\left(\frac{b}{k}\right)\left(\frac{a}{\ell}\right)\left(\frac{ab/(a,b)^2}{m}\right)
.
\end{equation}
Il vient
\begin{align*}
T(P; \alpha,\beta,\bve)
   =~&\sum_{(a_0,b_0)\in E_{\alpha,\beta}}\sum_{\substack{
(k,k',\ell,\ell',m,m')\in \NN^6\\
2^{\alpha} kk'mm',2^\beta\ell \ell'mm'\leq P\\
(\varepsilon_1kk'mm',\varepsilon_2\ell \ell'mm')\equiv (a_0,b_0)\bmod 8
}} \frac{ \mu^2(2kk'mm' \ell \ell' ) }{2^{\omega(kk' \ell \ell'mm')}}
\\
&\qquad\times    \left(\frac{\varepsilon_22^\beta\ell
\ell'mm'}{k}\right)\left(\frac{\varepsilon_1 2^{\alpha}
kk'mm'}{\ell}\right)\left(\frac{\varepsilon_1\varepsilon_22^{\alpha+\beta}
kk' \ell \ell'}{m}\right).
\end{align*}
La loi de r\'eciprocit\'e quadratique \eqref{loiquad} et la
multiplicativit\'e des caract\`eres fournissent
\begin{equation}\label{appliloi}\left(\frac{\varepsilon_22^{\beta}
\ell \ell'mm'}{k}\right)\!\!\left(\frac{\varepsilon_1 2^{\alpha}
kk'mm'}{\ell}\right)\!\!\left(\frac{\varepsilon_1\varepsilon_22^{\alpha+\beta}
kk' \ell \ell'}{m}\right)
=u
\left(\frac{  \ell' }{k }\right)\!\!\left(\frac{ k' }{\ell
}\right)\!\!\left(\frac{  \ell'k'  }{ m}\right) \left(\frac{m'}{k
\ell }\right),
\end{equation}
avec
\begin{equation}\label{defu}u=u(k,\ell,m)
=(-1)^{\vartheta(k)\vartheta(\ell)+\vartheta(k)\vartheta(m)+\vartheta(m)\vartheta(\ell)}
   \left(\frac{\varepsilon_22^{\beta}
}{km}\right)\left(\frac{\varepsilon_1 2^{\alpha}  }{\ell m}\right)
.\end{equation}

Un calcul simple fournit le r\'esultat suivant.

\begin{lemma}\label{lem:calculu}
  Lorsque $(k,\ell)\=(k_0,\ell_0)\bmod 8$  et $(\varepsilon_1
k_0 m,\varepsilon_2  \ell_0 m)\in E_{\alpha,\beta} $, on a
$$
u(k,\ell,m)=(-1)^{\vartheta(\varepsilon_1k_0m)\vartheta(\varepsilon_2\ell_0 m)+
   \vartheta(\varepsilon_1) \vartheta(\varepsilon_2)+\vartheta(m)}
.
$$
\end{lemma}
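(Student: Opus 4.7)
Le plan consiste à réduire l'égalité à un calcul d'exposants modulo $2$ via les formules \eqref{eq:hilbert-p} et \eqref{eq:hilbert-2}. Pour $n$ impair, on a $\bigl(\frac{-1}{n}\bigr)=(-1)^{\vartheta(n)}$ et $\bigl(\frac{2}{n}\bigr)=(-1)^{\epsilon(n)}$, où l'on pose $\epsilon(n)=(n^2-1)/8\bmod 2$; ainsi $\epsilon(n)=0$ si $n\equiv\pm 1\bmod 8$ et $\epsilon(n)=1$ si $n\equiv\pm 3\bmod 8$. Les fonctions $\vartheta$ et $\epsilon$ sont additives modulo $2$ sur les entiers impairs, et $\vartheta(m)^2\equiv\vartheta(m)\bmod 2$. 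Appliquées à \eqref{defu} et développées via $\vartheta(km)\equiv\vartheta(k)+\vartheta(m)\bmod 2$ et $\vartheta(\ell m)\equiv\vartheta(\ell)+\vartheta(m)\bmod 2$, ces identités fournissent $u(k,\ell,m)=(-1)^E$ avec
$$
E\equiv \vartheta(k)\vartheta(\ell)+\vartheta(k)\vartheta(m)+\vartheta(m)\vartheta(\ell)+\vartheta(\varepsilon_2)\bigl(\vartheta(k)+\vartheta(m)\bigr)+\vartheta(\varepsilon_1)\bigl(\vartheta(\ell)+\vartheta(m)\bigr)+\alpha\,\epsilon(\ell m)+\beta\,\epsilon(km)\bmod 2.
$$

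Pour le membre de droite, je poserais $K_0=\varepsilon_1k_0m$, $L_0=\varepsilon_2\ell_0m$, de sorte qu'il s'écrit $(-1)^R$ avec $R=\vartheta(K_0)\vartheta(L_0)+\vartheta(\varepsilon_1)\vartheta(\varepsilon_2)+\vartheta(m)$. En développant $\vartheta(K_0)\vartheta(L_0)=\bigl(\vartheta(\varepsilon_1)+\vartheta(k)+\vartheta(m)\bigr)\bigl(\vartheta(\varepsilon_2)+\vartheta(\ell)+\vartheta(m)\bigr)$ modulo $2$ et en exploitant $\vartheta(m)^2\equiv\vartheta(m)$, on vérifie que $R$ coïncide terme à terme avec la partie en $\vartheta$ de $E$: le produit croisé $\vartheta(\varepsilon_1)\vartheta(\varepsilon_2)$ issu du développement annule le terme explicite de même nom, et $\vartheta(m)^2$ annule le terme supplémentaire $\vartheta(m)$. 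On aboutit donc à
$$
E-R\equiv \alpha\,\epsilon(\ell m)+\beta\,\epsilon(km)\bmod 2.
$$

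Il reste alors à vérifier l'annulation de $E-R$ dans chacun des quatre cas $(\alpha,\beta)\in\{0,1\}^2$, en s'appuyant sur les descriptions \eqref{eq:set-E}. Pour $(0,0)$, c'est immédiat. Pour $(1,0)$, la condition $L_0\equiv 1\bmod 8$ impose $\epsilon(L_0)=0$, or $\epsilon(L_0)=\epsilon(\varepsilon_2)+\epsilon(\ell_0)+\epsilon(m)=\epsilon(\ell m)$; le cas $(0,1)$ est symétrique. Pour $(1,1)$, on a $(K_0,L_0)\in\{(\pm 1,\pm 1),(\pm 3,\pm 3)\}\bmod 8$, si bien que $\epsilon(K_0)=\epsilon(L_0)$ et finalement $\epsilon(km)+\epsilon(\ell m)\equiv\epsilon(K_0)+\epsilon(L_0)\equiv 0\bmod 2$. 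L'unique difficulté technique est de conduire ces développements sans erreur de signe; aucune obstruction conceptuelle n'apparaît, les ensembles $E_{\alpha,\beta}$ encodant précisément les contraintes 2-adiques qui garantissent l'annulation des termes résiduels en $\epsilon$.
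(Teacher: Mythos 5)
Your proof is correct and follows essentially the same route as the paper's: both reduce \eqref{defu} to mod-$2$ exponent arithmetic via the additivity of $\vartheta$ (and of $\epsilon$, i.e.\ the formulas for $\bigl(\tfrac{-1}{n}\bigr)$ and $\bigl(\tfrac{2}{n}\bigr)$), and both use the structure of the sets $E_{\alpha,\beta}$ in \eqref{eq:set-E} to show that the residual factor $\bigl(\tfrac{2^{\beta}}{km}\bigr)\bigl(\tfrac{2^{\alpha}}{\ell m}\bigr)$ equals $1$, case by case in $(\alpha,\beta)$. The only (cosmetic) difference is organisational — the paper first kills the $2$-power factor and then matches the $\vartheta$-parts, whereas you compute $E-R$ first — and your treatment of the case $(\alpha,\beta)=(1,1)$ via $\epsilon(K_0)=\epsilon(L_0)$ is in fact spelled out more explicitly than the paper's ``raisonnements identiques''.
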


\begin{proof}
Nous avons toujours
$$
   \left(\frac{ 2^{\beta}  }{k_0m}\right)\left(\frac{  2^{\alpha}
}{\ell_0m}\right) =1.
$$
En effet,
le cas $(\alpha,\beta)=(0,0)$ \'etant trivial, regardons le cas
$(\alpha,\beta)=(1,0)$. Il d\'ecoule de \eqref{eq:set-E} que
$\varepsilon_2\ell_0m\equiv 1\bmod 8$  ce qui montre la formule dans
ce cas. Les raisonnements sont identiques pour
$(\alpha,\beta)=(0,1)$ ou $ (1,1)$.  Nous avons donc  bien
l'expression attendue.
Comme
\begin{align*}
 \left(\frac{\varepsilon_2}{km}\right)\left(\frac{\varepsilon_1}{\ell 
m}\right)
    =(-1)^{
\vartheta(k_0m)\vartheta(\varepsilon_2)+\vartheta(\ell_0m)\vartheta(\varepsilon_1)},
\end{align*}
nous avons
\begin{align*}  u(k,\ell,m)&
=(-1)^{\vartheta(k)\vartheta(\ell)+\vartheta(k)\vartheta(m)+\vartheta(m)\vartheta(\ell)+
\vartheta(k_0m)\vartheta(\varepsilon_2)+\vartheta(\ell_0
m)\vartheta(\varepsilon_1)}
\\&
=(-1)^{\vartheta(\varepsilon_1k_0m)\vartheta(\varepsilon_2\ell_0 m)+
   \vartheta(\varepsilon_1) \vartheta(\varepsilon_2)+\vartheta(m)}
    ,
\end{align*}
ce qui fournit le r\'esultat recherch\'e.
\end{proof}

Nous reprenons la d\'emarche d\'evelopp\'ee  dans \cite{fi}.
Pour cela, nous consid\'erons $$V=(\log P)^{B},$$ o\`u $B$ est un
param\`etre qui sera choisi suffisamment grand en fonction de la
valeur de $A$ prise dans les applications du Corollaire \ref{c:estQ}.

La contribution \`a $T(P; \alpha,\beta,\bve)$
des couples d'entiers $(a,b)$ tels que $mm'>V$ est $\ll P^2/V$.
Dor\'enavant, nous nous restreignons au cas $mm'\leq V$.

La contribution \`a $T(P; \alpha,\beta,\bve)$
des couples d'entiers $(a,b)$ tels que $k\leq V$ et $k'\leq V$  est
$\ll PV^2\log P$. De m\^eme lorsque $\ell  \leq V$ et $\ell'\leq V$.

Gr\^ace au
  Lemme
  \ref{l:filem2},
  du fait de  la pr\'esence du facteur  $(\frac{ \ell' }{k
})$,
   la contribution du cas $\ell'> V$ et
$k> V$  est
   \begin{align*}
   &\ll
\sum_{mm'\leq V}  \sum_{\ell \leq P/(mm'V)}\sum_{k'\leq
P/(mm'V)}
\frac{P^2}{mm'k'\ell}\left\{
\left(\frac{P}{mm'\ell}\right)^{-\frac{1}{6}}+\left(\frac{P}{mm'k'}\right)^{-\frac{1}{6}}\right\}(\log 
P)^{\frac{7}{6}}\\
&\ll
P^{2}V^{-\frac{1}{6}} (\log P)^{\frac{13}{6}}(\log V)^2,
\end{align*}
ce qui suffit lorsque $B> 25$. Nous avons la m\^eme majoration 
lorsque $\ell > V$ et $k'> V$ gr\^ace  \`a la
pr\'esence du facteur   $(\frac{k'}{\ell})$.

Il nous reste \`a traiter le cas $k,\ell\leq V$ ou $k',\ell'\leq V$.
Dans le premier cas, nous sommes amen\'es \`a consid\'erer lorsque
$(\varepsilon_1mm'kk'_0,\varepsilon_2mm'\ell\ell'_0)\in
E_{\alpha,\beta}$ la somme
\begin{align*}T_{k,\ell}(k'_0,\ell'_0,m,m')& =
\sum_{
\substack{
( k', \ell' )\in \NN^2\\
   k'\leq P/(2^{\alpha} mm'k),\ell' \leq P/(2^{\beta} mm'\ell)\\( k' , \ell')=
   (k'\ell', mm'k\ell )=1\\
   (k',\ell')\equiv (k'_0,\ell'_0)\bmod 8
}} \frac{ \mu^2(k')   }{2^{\omega(k')}}\frac{ \mu^2(\ell')
}{2^{\omega(\ell')}}\left(\frac{  \ell' }{km }\right)\left(\frac{ k'
}{\ell m }\right) ,\end{align*}
alors que, dans le deuxi\`eme cas, nous estimerons lorsque
$(\varepsilon_1mm' k_0k',\varepsilon_2mm'\ell_0\ell')\in
E_{\alpha,\beta}$ la somme
$$T'_{k',\ell'}(k_0,\ell_0,m,m')=
\sum_{
\substack{
( k , \ell  )\in \NN^2\\
   k \leq P/(2^{\alpha} mm'k'),\ell  \leq P/(2^{\beta} mm'\ell')\\( k  , \ell )=
   (k \ell , mm'k'\ell' )=1\\
   (k,\ell)\equiv (k_0,\ell_0)\bmod 8
}} \frac{ \mu^2(k )   }{2^{\omega(k )}}\frac{ \mu^2(\ell )
}{2^{\omega(\ell )}}\left(\frac{   \ell'm'}{k  }\right)\left(\frac{
k'm'}{\ell  }\right).
$$
En effet $u(k,\ell,m)=u(k_0,\ell_0,m)$, lorsque
$(k,\ell)\=(k_0,\ell_0)\bmod 8$, ne d\'epend pas de $(k,\ell)$.

Nous avons
\begin{align*}
T_{k,\ell}(k'_0,\ell'_0,m,m')&=
Q\left( \frac{P}{2^{\alpha} mm'k},\frac{P}{2^{\beta} 
mm'\ell};k_0',\ell_0', m'k ,
m' \ell,8,\ell m,k m,\chi_{\ell m},\chi_{k m}\right),
\end{align*}
o\`u $\chi_n(\cdot)=\left(\frac{ \cdot  }{n }\right)$.
Cette somme peut donc \^etre estim\'ee gr\^ace au Corollaire \ref{c:estQ}.
Nous obtenons
\begin{align*}
T_{k,\ell}(k'_0,\ell'_0,m,m')
=~&  \frac{ {\bf 1}_{k =\ell =m=1} }{2^{2+\alpha+\beta} \pi^3 }
\frac{\phi_2( m')}{m'^2}
    \frac{P^2}{\log P}\left\{ 1+O\Big(\frac{\log (2m' )}{\log
P}\Big)\right\}
+O\left( \frac{P^2\tau(kl)}{(\log P)^{
A}}\right),
\end{align*}
o\`u nous avons utilis\'e la formule $c(m',m',8,1,1)= {4}\phi_2(m')/{\pi^3}.$

De m\^eme, posant  $$u'(k_0,\ell_0,m') =(-1)^{\vartheta(
\ell_0)\vartheta(k'm')+\vartheta(k_0  )\vartheta(\ell'm') } ,$$
nous  obtenons gr\^ace \`a \eqref{loiquad}
\begin{align*}
T'_{k',\ell'}(k_0,\ell_0,m,m')=~&
u'(k_0,\ell_0,m')T_{k',\ell'}(k_0,\ell_0,m',m)
\\
=~&    \frac{ {\bf 1}_{k '=\ell' =m'=1} }{2^{2+\alpha+\beta} \pi^3 }
\frac{\phi_2( m )}{m^2}\frac{P^2}{\log P}\left\{ 1+O\Big(\frac{\log
(2m)}{\log P}\Big)\right\}
+O\left(
\frac{P^2\tau(k'\ell')}{(\log P)^{ A}}\right).
\end{align*}
Enfin, nous en d\'eduisons
\begin{align*}
T(P,\alpha,\beta,\bve)=~&
\frac{1}{\pi^3}  \Big(\sum_{
\substack{
   m\in\NN \\2\nmid m
}} \frac{ \mu^2( m)  \phi_2(m) }{2^{\omega( m)}m^2}\tau_{\alpha,\beta} (m)
    \Big)\frac{P^2}{\log P}\left\{ 1+O\Big(\frac{1}{\log P}\Big)\right\}\\
& + O\left( P^2 (\log V)^3\left\{\frac{V^3}{(\log P)^{ A}}+
\frac{(\log P)^{\frac{13}{6}}}{V^{\frac{1}{6}}} \right\} \right),
\end{align*}
avec
$$\tau_{\alpha,\beta}(m) =\frac{\#  E_{\alpha,\beta}}{2^{ \alpha+\beta}}+
\sum_{\substack{\varepsilon_1,\varepsilon_2\in \{\pm 1\}\\
(k_0,\ell_0)\in (\ZZ/8\ZZ)^2
\\
(\varepsilon_1 k_0 m,\varepsilon_2  \ell_0 m)\in E_{\alpha,\beta}
\bmod 8}}\!\!\frac{u(k_0,\ell_0,m) }{2^{2+\alpha+\beta}}  ,$$
o\`u $u$ a \'et\'e d\'efini en \eqref{defu}.

Gr\^ace \`a \eqref{eq:set-E}, nous avons
$$
\sum_{(\alpha,\beta)\in \{ 0,1\}^2}\frac{\#  E_{\alpha,\beta}}{2^{
\alpha+\beta}}=15 .
$$
Le Lemme \ref{lem:calculu} et les \'egalit\'es \eqref{eq:set-E} 
fournissent ainsi
\begin{align*}
\sum_{(\alpha,\beta)\in \{
0,1\}^2}\tau_{\alpha,\beta}(m)&=15+\sum_{(\alpha,\beta)\in \{
0,1\}^2}\sum_{\substack{\varepsilon_1,\varepsilon_2\in \{\pm 1\}\\
(u_0  ,v_0  )\in E_{\alpha,\beta}\bmod 8}}\frac{ (-1)^{
\vartheta(\varepsilon_1) \vartheta(\varepsilon_2)}
}{2^{2+\alpha+\beta}}(-1)^{\vartheta(u_0)\vartheta(v_0)+ \vartheta(m)
}  \\
&=15+ (-1)^{\vartheta(m)} \sum_{(\alpha,\beta)\in \{ 0,1\}^2}
\frac{1}{2^{1+\alpha+\beta}}\sum_{\substack{
(u_0  ,v_0  )\in E_{\alpha,\beta}\bmod
8}}(-1)^{\vartheta(u_0)\vartheta(v_0)  }  \\
&
  =15+5(-1)^{\vartheta(m)} .
\end{align*}

Un simple calcul fournit
\begin{align*}\sum_{
\substack{
   m\in\NN \\2\nmid m
}} &\frac{z^{\vartheta(m)}\mu^2( m)\phi_2(m)}{2^{\omega( m)}m^2}
= \prod_{\substack{p>2}}\Big( 1+\frac{z^{\vartheta(p)}}{ 2p( p+1)
}\Big),  \quad (z\in \{\pm1\}).\end{align*}
En choisissant
$A=81$ and $B=26$,
nous obtenons
\begin{align*}T(P)& =C\frac{P^2}{\log P}\Big\{ 1+O\Big(\frac{1}{\log
P}\Big)\Big\}, \end{align*}
o\`u
$$
C=
\frac{15 }{  \pi^3} \prod_{p>2}\Big( 1+\frac{1}{
2p(p+1)}\Big)+\frac{5 }{ \pi^3} \prod_{\substack{p>2}}\Big(
1+\frac{(-1)^{\vartheta(p)}}{ 2p( p+1) }\Big).
$$
\`A partir de \eqref{eq:N1}, il vient
ainsi \begin{equation}\label{estN1P}N_1(P)=
\frac{6C }{  \pi^2} \frac{P^3}{\log P}\Big\{ 1+O\Big(\frac{1}{\log
P}\Big)\Big\}.
\end{equation}

\section{\'Etude de  $N_2(P)$}

Notre objectif dans cette section est d'estimer la somme
$$
N_2(P)
=\sum_{(a,b,c)\in S(P)} f(a,b)h(a,b,c).
$$
Les calculs sont plus compliqu\'es que pour l'estimation de $N_1(P)$
mais rel\`event des m\^emes m\'ethodes.

Lorsque $f(a,b)\neq 0$, nous aurons besoin d'une expression simple de
la fonction $h(a,b,c)$ d\'efinie en \eqref{defh}. Comme  $(c,b)_p=1$
pour tout nombre premier impair  $p$ ne divisant pas $bc$, nous avons
$$h(a,b,c)=\prod_{\substack{p\mid 2bc \\ a\not\in {\QQ_p^*}^2}} (c,b)_p.$$
Rappelons la d\'efinition \eqref{eq:S} de $S$.
Nous  param\'etrons les $(a,b,c)\in S$ par
\begin{equation}\label{parametre}
a =\varepsilon_12^\alpha d_0d_{12}d_{13} a',\quad
b =\varepsilon_22^\beta d_0d_{12}d_{23} b',\quad
c =2^\gamma d_0d_{13}d_{23} c',
\end{equation}
avec $d_{ij}$, $d_0$, $a',$ $b',$ $c'$  des nombres impairs, 
$\alpha,\beta,\gamma \in
\{0,1\}$ et les conditions de coprimalit\'e
\begin{align*}
&(d_{12}, d_{13})=1,\quad (d_{12}, d_{23})=1,\quad (d_{13}, d_{23})=1, \\
&(a'b'c',d_{12}  d_{13} d_{23})=(a',b'c')=(b',c')=1.
\end{align*}

Nous \'ecrivons $h(a,b,c)=h_1(a,b,c)h_2(a,b,c)$, avec
$h_1(a,b,c)$ le produit sur les $p$ impairs et
la quantit\'e $h_2(a,b,c)$ d\'esignant le facteur li\'e \`a $p=2$.
Les deux r\'esultats suivants concernent leur calcul
explicite.

\begin{lemma}\label{lem:h1}
Lorsque  $(a,b,c)\in S$
et $f(a,b)\neq 0$, on a
\begin{align*}
h_1 (a, b,c)&=  \frac{1}{2^{\omega(c')}}
\sum_{nn'n''n''' =c'}\tilde u\left(\frac{ a'  }{nn''}\right)
\left(\frac{ b'  }{n'n''d_0d_{13}}\right),
\end{align*}
o\`u
\begin{align*}
\tilde u&=\tilde u(n,n',n'',n''', d_0, d_{12},d_{13},d_{23} )\\
&=
\mu(n'')
(-1)^{\vartheta(d_0d_{12})\vartheta(d_0d_{13}nn')
}\left(\frac{\varepsilon_12^{\alpha} d_{13} }{nn''}\right)
\left(\frac{\varepsilon_22^{\beta } d_{23} }{n'n''}\right)
\left(\frac{\varepsilon_22^{\beta }   }{d_0d_{13}}\right)
   \left(\frac{2^\gamma  n''n'''}{d_0d_{12}}\right)
     \left(\frac{d_{23}}{d_{12}d_{13}}\right).
\end{align*}
\end{lemma}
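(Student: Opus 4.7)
Le plan est de calculer $h_1(a,b,c)$ en regroupant les premiers impairs $p$ divisant $bc$ selon le facteur unique de la param\'etrisation \eqref{parametre} qu'ils divisent : $p \mid d_0$, $d_{12}$, $d_{13}$, $d_{23}$, $b'$ ou $c'$ (le cas $p \mid a'$ est vide car alors $p \nmid bc$, donc $(c,b)_p = 1$). L'observation-cl\'e sera que, sous l'hypoth\`ese $f(a,b) \neq 0$, les classes $p \mid b'$ et $p \mid d_{23}$ fournissent une contribution triviale : pour ces premiers, $v_p(b) = 1$ et $v_p(a) = 0$, ce qui emp\^eche $b$ et $ab$ d'\^etre des carr\'es dans $\QQ_p^*$ et force donc $a \in \QQ_p^{*2}$, annulant l'indicatrice $[a \notin \QQ_p^{*2}]$.

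Pour les classes $p \mid d_0, d_{12}, d_{13}$, l'indicatrice vaut toujours $1$ puisque $v_p(a) = 1$ est impair, et \eqref{eq:hilbert-p} fournit directement $(c,b)_p$. En substituant \eqref{parametre}, en utilisant la multiplicativit\'e des symboles de Legendre et les annulations de type $(d_0/p)^2 = 1$, chaque classe donnera un facteur d\'eterministe explicite, typiquement
$$
\prod_{p \mid d_0}(c,b)_p = (-1)^{\vartheta(d_0)}\left(\frac{\varepsilon_2 2^{\beta+\gamma}d_{12}d_{13}b'c'}{d_0}\right),
$$
et des expressions analogues pour $p \mid d_{12}$ et $p \mid d_{13}$. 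Reste la contribution des premiers $p \mid c'$ : ni $a$ ni $b$ n'est divisible par $p$ et $v_p(c) = 1$, de sorte que $(c,b)_p = (b/p)$ et le facteur local s'\'ecrit
$$
\frac{1+\left(\tfrac{a}{p}\right)}{2} + \frac{1-\left(\tfrac{a}{p}\right)}{2}\left(\frac{b}{p}\right) = \frac{1}{2}\left(1 + \left(\frac{a}{p}\right) + \left(\frac{b}{p}\right) - \left(\frac{ab}{p}\right)\right).
$$
Le d\'eveloppement du produit sur $p \mid c'$ produira la somme $\frac{1}{2^{\omega(c')}}\sum_{nn'n''n'''=c'}\mu(n'')\left(\frac{a}{nn''}\right)\left(\frac{b}{n'n''}\right)$, o\`u l'identit\'e $(-1)^{\omega(n'')} = \mu(n'')$ est valide puisque $n'' \mid c'$ et $c'$ est sans facteur carr\'e.

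Apr\`es substitution de \eqref{parametre} dans $(a/nn'')$ et $(b/n'n'')$, on fera appara\^itre les facteurs $(a'/nn'')$ et $(b'/n'n'')$ de l'\'enonc\'e; on regroupera ensuite $(b'/n'n'')$ avec $(b'/d_0)$ et $(b'/d_{13})$ provenant des contributions fixes ci-dessus pour reconstruire $(b'/n'n''d_0 d_{13})$. La fin du calcul consistera \`a appliquer syst\'ematiquement la loi de r\'eciprocit\'e quadratique \eqref{loiquad} aux symboles restants dont l'\'ecriture diff\`ere de celle de $\tilde u$ --- ceux des types $(d_0/nn'')$, $(c'/d_0)$, $(d_{12}/d_{13})$, etc.

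Le principal obstacle technique sera la comptabilit\'e pr\'ecise des signes $(-1)^{\vartheta(\cdot)\vartheta(\cdot)}$ engendr\'es par chaque application de \eqref{loiquad}. Une fois accumul\'es et simplifi\'es modulo $2$ gr\^ace aux identit\'es \'el\'ementaires $\vartheta(k\ell) \equiv \vartheta(k) + \vartheta(\ell)$, $\vartheta(nn'') + \vartheta(n'n'') \equiv \vartheta(nn')$ et $\vartheta(c') \equiv \vartheta(nn'n''n''')$, l'ensemble de ces signes, combin\'e au $(-1)^{\vartheta(d_0)}$ provenant de la classe $d_0$, se r\'eduira exactement au signe global $(-1)^{\vartheta(d_0 d_{12})\vartheta(d_0 d_{13} nn')}$ figurant dans $\tilde u$, livrant la formule annonc\'ee.
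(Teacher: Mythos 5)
Your proposal is correct and follows essentially the same route as the paper's own proof: the same classification of the odd primes dividing $bc$ according to the parametrization \eqref{parametre} (with the $b'$ and $d_{23}$ classes annihilated because $f(a,b)\neq 0$ forces $a\in{\QQ_p^*}^2$ there), the same local factor $\tfrac12\bigl(1+(\tfrac{a}{p})+(\tfrac{b}{p})-(\tfrac{ab}{p})\bigr)$ expanded into the sum over $nn'n''n'''=c'$ weighted by $\mu(n'')$, and the same closing applications of quadratic reciprocity \eqref{loiquad}. The sign bookkeeping you leave implicit does work out exactly as you claim, collapsing to $(-1)^{\vartheta(d_0d_{12})\vartheta(d_0d_{13}nn')}$.
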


\begin{proof}
Lorsque $a,b,c$ sont sans facteur carr\'e et $p\mid a$, alors
$a\not\in {\QQ_p^*}^2$. De plus, lorsque $p\nmid a$ et $p\mid b$, le
fait que $f(a,b)\neq 0$ implique que $a$ est un carr\'e dans
${\QQ_p^*}$ ce qui est exclu. Lorsque $p\nmid a$, nous nous
restreignons \`a  $p\nmid b$ et donc $p\mid c$ et
$(\frac{a}{p})=-1$. Ainsi d'apr\`es \eqref{eq:hilbert-p},
nous avons $(c,b)_p=(\frac{b}{p})$.
Il vient
$$h_1(a,b,c)=\prod_{\substack{p\mid (bc,a) \\ p>2}} (c,b)_p
\prod_{\substack{p\mid  c \\ p\nmid 2ab}}
\frac{1}{2}\Big\{1+\left(\frac{a}{p}\right)+\left(\frac{b}{p}\right)-\left(\frac{ab}{p}\right) 
\Big\}.
$$
En d\'eveloppant le produit, nous obtenons
$$h_1(a,b,c)=\frac{1}{2^{\omega(c')}}\prod_{\substack{p\mid (bc,a) \\
p>2}} (c,b)_p
\sum_{nn'n''n''' =c'}\mu(n'')\left(\frac{a}{n}\right)
\left(\frac{b}{n'}\right) \left(\frac{ab}{n''}\right)  .
$$
De plus, nous avons
\begin{align*}\prod_{\substack{p\mid (bc,a) \\ p>2}} (c,b)_p
&=
\prod_{\substack{p\mid d_0  }}
\left(\frac{-1}{p}\right)\left(\frac{bc/p^2}{p}\right)\prod_{\substack{p\mid
(a,b) \\ p\nmid 2c}}  \left(\frac{c}{p}\right)
\prod_{\substack{p\mid (a,c) \\ p\nmid 2b}}  \left(\frac{b}{p}\right)
\\&=
\left(\frac{-1}{d_0}\right)\left(\frac{bc/d_0^2}{d_0}\right)\prod_{\substack{p\mid 
(a,b) \\ p\nmid 2c}}  \left(\frac{c}{p}\right)
\prod_{\substack{p\mid (a,c) \\ p\nmid 2b}}  \left(\frac{b}{p}\right).
\end{align*}
En utilisant les notations \eqref{parametre}, nous obtenons
\begin{align*} \prod_{\substack{p\mid (bc,a) \\ p>2}} (c,b)_p
    &=
\left(\frac{-1}{d_0}\right)\left(\frac{\varepsilon_22^{\beta+\gamma}
b'c'd_{12}d_{13} }{d_0}\right)  \left(\frac{2^\gamma d_0d_{13}d_{23}
c'}{d_{12}}\right)  \left(\frac{\varepsilon_22^\beta d_0d_{12}d_{23}
b'}{d_{13}}\right)
   \\&=
   \left(\frac{\varepsilon_22^{\beta }  b' }{d_0d_{13}}\right)
\left(\frac{2^\gamma c' }{d_0d_{12}}\right)
   \left(\frac{   d_{23}  }{d_{12}d_{13}}\right)
(-1)^{\vartheta(d_0d_{12})\vartheta(d_0d_{13}) },
\end{align*}
puisque  la loi de r\'eciprocit\'e quadratique \eqref{loiquad} fournit
$$\left(\frac{-1}{d_0}\right)\left(\frac{ d_{12}d_{13} }{d_0}\right)
   \left(\frac{d_0d_{13}}{d_{12}}\right)  \left(\frac{ 
d_0d_{12}}{d_{13}}\right)
=(-1)^{\vartheta(d_0d_{12})\vartheta(d_0d_{13}) }.$$

Cela implique ainsi
\begin{align*} h_1(a,b,c)
=~& \frac{1}{2^{\omega(c')}}
\sum_{nn'n''n''' =c'}\mu(n'')\left(\frac{a}{n}\right)
\left(\frac{b}{n'}\right) \left(\frac{ab}{n''}\right)
\\
& \times \left(\frac{\varepsilon_22^{\beta }  b' }{d_0d_{13}}\right)
\left(\frac{2^\gamma c' }{d_0d_{12}}\right)
   \left(\frac{   d_{23}  }{d_{12}d_{13}}\right)
(-1)^{\vartheta(d_0d_{12})\vartheta(d_0d_{13}) }.
\end{align*}
Puis, prenant $\tilde u$ comme
  dans l'\'enonc\'e du lemme,
nous obtenons
le r\'esultat attendu
apr\`es calcul de $ (\frac{   n n'
}{d_0d_{12}})(\frac{d_0d_{12}}{   n n'  })
$
par la loi de r\'eciprocit\'e quadratique \eqref{loiquad}.
\end{proof}

\begin{lemma}\label{lem:h2}
Lorsque  $(a,b,c)\in S$, $f(a,b)\neq 0$ et $$
(a,b,c)=(2^\alpha u,2^\beta v,2^\gamma w),
$$
avec $u,v,w$ impair, on a
$$
h_2 (a, b,c)=
\begin{cases}
1, & \mbox{si $a\equiv 1\bmod 8$},\\
1, & \mbox{si $2\mid a$ et $b\equiv
1\bmod 8$},\\
(-1)^{\vartheta(v)\vartheta(w)+ \frac{\gamma(v^2-1)}{8}+\frac{w^2-1}{8}},
& \mbox{si $2\mid (a,b)$ et $uv\equiv 1\8$},\\
(-1)^{\vartheta(b)\vartheta(w)+ \frac{\gamma(b^2-1)}{8}},
& \mbox{si $a\= 3,5,7\8$ et $1\in \{b, ab\} \8$},\\
0, & \mbox{sinon}.
\end{cases}
$$
\end{lemma}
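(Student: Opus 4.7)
En ne conservant dans la d\'efinition \eqref{defh} que le facteur associ\'e \`a $p=2$, on obtient
\[
h_2(a,b,c) = \begin{cases} (c,b)_2, & \text{si } a\not\in (\QQ_2^*)^2,\\ 1, & \text{si } a\in (\QQ_2^*)^2.\end{cases}
\]
L'entier $a=2^\alpha u$ \'etant sans facteur carr\'e avec $\alpha\in\{0,1\}$, la caract\'erisation classique des carr\'es de $\QQ_2^*$ fournit $a\in(\QQ_2^*)^2$ si, et seulement si, $\alpha=0$ et $u\equiv 1\8$, c'est-\`a-dire $a\equiv 1\8$. Cela r\`egle imm\'ediatement le premier cas de l'\'enonc\'e.

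Supposons d\'esormais $a\not\equiv 1\8$. La formule \eqref{eq:hilbert-2} appliqu\'ee \`a $x=c=2^\gamma w$ et $y=b=2^\beta v$ fournit
\[
(c,b)_2 = (-1)^{\vartheta(w)\vartheta(v)+\gamma(v^2-1)/8+\beta(w^2-1)/8}.
\]
Il suffit alors de sp\'ecialiser cette identit\'e selon les trois configurations restantes pour lesquelles $f_2(a,b)\neq 0$. Lorsque $2\mid a$ et $b\equiv 1\8$, on a $\beta=0$, $\vartheta(v)=0$ et $v^2\equiv 1\bmod{16}$, de sorte que l'exposant est pair et $h_2=1$. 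Lorsque $2\mid (a,b)$ et $uv\equiv 1\8$ --- condition \'equivalente \`a $ab\equiv 4\bmod{32}$ ---, on a $\alpha=\beta=1$ et la formule donne exactement la troisi\`eme ligne. Lorsque $a\equiv 3,5,7\8$ et $1\in\{b,ab\}\8$, la condition impose $b$ impair, donc $\beta=0$ et $v=b$, ce qui livre la quatri\`eme ligne. Toute autre configuration viole $f_2(a,b)\neq 0$ ; la convention $h_2=0$ dans la derni\`ere ligne est alors coh\'erente avec le produit $f(a,b)h(a,b,c)$ figurant dans \eqref{eq:NBr}.

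La seule v\'erification d\'elicate concerne le deuxi\`eme cas~: il faut constater que $v\equiv 1\8$ entra\^ine $v^2\equiv 1\bmod{16}$, d'o\`u la parit\'e de $(v^2-1)/8$, qui combin\'ee \`a $\beta=0$ annule l'exposant. Les autres simplifications rel\`event d'une lecture directe de \eqref{eq:hilbert-2}, et la disjonction des quatre cas de l'\'enonc\'e se v\'erifie aussit\^ot sur les classes de $a$ modulo~$8$ et les parit\'es de $a$ et~$b$.
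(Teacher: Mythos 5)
Your proof is correct and takes essentially the same route as the paper's own: reduce $h_2$ to the single factor $(c,b)_2$, present exactly when $a\not\equiv 1\8$ (since $a$ is squarefree), then evaluate it via \eqref{eq:hilbert-2} in each configuration allowed by $f_2(a,b)\neq 0$. The only difference is expository, e.g.\ you spell out the parity check that $(v^2-1)/8$ is even when $v\equiv 1\8$, which the paper leaves implicit.
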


\begin{proof}
Si $a\equiv 1\bmod 8$ alors $a\in {\QQ_2^*}^2$ et ainsi $h_2(a,b,c)=1$.
Si $2\mid a$ et $2\nmid b$, alors $f(a,b)\neq 0$ implique $b\equiv
1\bmod 8$. La formule \eqref{eq:hilbert-2} implique encore
$h_2(a,b,c)=1.$
Si $2\mid (a,b)$, alors $f(a,b)\neq 0$ implique $ab\equiv 4\bmod
{32}$. Donc la
formule \eqref{eq:hilbert-2} implique le r\'esultat.
Si $a\equiv  3,5,7\bmod 8$, alors $f(a,b)\neq 0$ implique que $b$ est
impair et que  $b$ ou  $ab$  est congru \`a $1\bmod 8$. Lorsque $c=2^\gamma w$,
la formule \eqref{eq:hilbert-2} implique le r\'esultat.
\end{proof}

Il est clair que la valeur de $h_2(a,b,c) $ ne
d\'epend que de la valeur modulo $8$ de $(u,v,w)$ et des valuations 
$2$-adiques $\alpha,\beta,\gamma$.

Avec les notations \eqref{parametre}, nous avons
$$a/(a,b)=\ve_12^{\alpha-\min\{\alpha,\beta \}}a'd_{13},\quad
b/(a,b)=\ve_2 2^{\beta-\min\{\alpha,\beta \}}b'd_{23},\qquad (a,b)=2^{
\min\{\alpha,\beta \}} d_0d_{12}.
$$
Dans la sommation \eqref{calculfab}, nous
rempla\c{c}ons
$(k,k',\ell,\ell',m,m')$ par
$$
(kk_{13},k'k'_{13},\ell\ell_{23},\ell' \ell'_{23},m_0m_{12},m'_0m'_{12}),
$$
tels que
\begin{equation}\label{condi}
k k'=a',\,k_{13}k_{13}'=d_{13},\quad \ell \ell'=b',\,\ell_{23}
\ell'_{23}=d_{23},\qquad  m_0m'_0=d_0,\, m_{12} m'_{12}=d_{12}.
\end{equation}
Lorsque $(a2^{-\alpha},b2^{-\beta})\equiv (a_0,b_0)\bmod 8 $ avec
$(a_0,b_0)\in E_{\alpha,\beta}$, $\varepsilon_1a>0, $ $
\varepsilon_2b>0$ o\`u $\bve\in \{\pm 1\}^2$,
la formule \eqref{calculfab} s'\'ecrit aussi
\begin{align*}
    f(a,b)=~&
   \frac{ 1 }{2^{\omega(2^{-\alpha-\beta}ab)}}
   \sum
\left(\frac{\varepsilon_22^\beta\ell
\ell'\ell_{23}\ell_{23}'m_0m_0'm_{12}m_{12}'}{kk_{13}}\right)
\\
& \times  \left(\frac{\varepsilon_1 2^{\alpha}
kk'k_{13}k_{13}'m_0m_0'm_{12}m_{12}'}{\ell\ell_{23}}\right)\left(\frac{\varepsilon_1\varepsilon_22^{\alpha+\beta} 
kk'k_{13}k_{13}' \ell \ell'\ell_{23}\ell_{23}'}{m_0m_{12}}\right).
\end{align*}
avec $a=\varepsilon_12^\alpha kk'k_{13}k_{13}'m_0m_0'm_{12}m_{12}'  $
et $b=\varepsilon_22^\beta \ell
\ell'\ell_{23}\ell_{23}'m_0m_0'm_{12}m_{12}' $
satisfaisant \eqref{condi}.
La formule \eqref{appliloi} fournit alors
\begin{align*}
    f(a,b)
    =~&
\frac{ 1 }{2^{\omega(2^{-\alpha-\beta}ab)}}  \sum
u(kk_{13},\ell\ell_{23},m_0m_{12})
v(k,\ell,k_{13},k_{13}',\ell_{23},\ell_{23}',m_0m_{12},m_0'm_{12}')
\\
&\times \left(\frac{  \ell'  }{k  }\right)\left(\frac{ k'  }{\ell
}\right)\left(\frac{k  }{  \ell_{23}'m_0'm_{12}' }\right)\left(\frac{
k'  }{ \ell_{23}   m_0m_{12}}\right)\left(\frac{\ell}{  k_{13}'m_0'
m_{12}'}\right)
\left(\frac{  \ell'  }{ k_{13}m_0m_{12} }\right),
\end{align*}
avec
\begin{align*}
v
&=v(k,\ell,k_{13},k_{13}',\ell_{23},\ell_{23}',m_0m_{12},m_0'm_{12}') \\
&=
\left(\frac{   \ell_{23}' }{ k_{13} m_0m_{12}}\right)\left(\frac{
k_{13}' }{ \ell_{23}    m_0m_{12}}\right)\left(\frac{m_0'm_{12}'}{
k_{13} \ell_{23} }\right)
(-1)^{\vartheta(k_{13}'m_0'm_{12}')\vartheta(\ell)+\vartheta(\ell_{23}'m_0'm_{12}')\vartheta(k)}.
\end{align*}
Le param\'etrage \eqref{condi} fournit aussi
\begin{align*} h_1(a,b,c) =&
\frac{1}{2^{\omega(c')}}
\sum_{nn'n''n''' =c'}\tilde u~ \left(\frac{ kk'  }{nn''}\right)
\left(\frac{ \ell\ell'  }{n'n''m_0m_0'k_{13}k_{13}'}\right),
\end{align*}
gr\^ace au Lemme \ref{lem:h1}.

Lorsque $(a2^{-\alpha},b2^{-\beta},c2^{-\gamma})\equiv
(u_0,v_0,w_0)\bmod 8 $ avec $(u_0,v_0)\in E_{\alpha,\beta}$,
$\varepsilon_1a>0, $ $ \varepsilon_2b>0$ o\`u
$\bve\in \{\pm 1\}^2$,   nous devons donc sommer
le terme
\begin{align*}
f(a,b) h(a,b,c) =~&
\sum\frac{\tilde u_1}{2^{\omega(2^{-\alpha-\beta-\gamma}abc) }} 
\left(\frac{ kk'
}{nn''}\right) \left(\frac{ \ell\ell'
}{n'n''m_0m_0'k_{13}k_{13}'}\right) \left(\frac{  \ell'  }{k 
}\right)\left(\frac{ k'  }{\ell   }\right)
\\
&\times \left(\frac{k  }{  \ell_{23}'m_0'm_{12}' }\right)\left(\frac{
k'  }{ \ell_{23}   m_0m_{12}}\right)\left(\frac{\ell}{  k_{13}'m_0'
m_{12}'}\right)
\left(\frac{  \ell'  }{ k_{13}m_0m_{12} }\right)\\
=~&
\sum\frac{\tilde u_1}{2^{\omega(2^{-\alpha-\beta-\gamma}abc)}} \left(\frac{ kk'
}{nn''}\right) \left(\frac{ \ell\ell'  }{n'n'' }\right) \left(\frac{ 
\ell'  }{k  }\right)\left(\frac{ k'  }{\ell   }\right)
\\& \times
\left(\frac{k  }{  \ell_{23}'m_0'm_{12}' }\right)\left(\frac{ k'  }{
\ell_{23}   m_0m_{12}}\right)\left(\frac{\ell}{  k_{13}m_0
m_{12}'}\right)
\left(\frac{  \ell'  }{ k_{13}'m_0'm_{12} }\right)
,
\end{align*}
avec des sommations sur les entiers satisfaisant \eqref{condi} et
$c'=nn'n''n'''$,
o\`u
\begin{align*}
\tilde u_1 =~&
\tilde u_1(k ,k',\ell,\ell'
,k_{13},k_{13}',\ell_{23},\ell_{23}',m_0,m_0',m_{12},m_{12}')
  \\
  =~&
u(kk_{13},\ell\ell_{23},m_0m_{12})v(k,\ell,k_{13},k_{13}',\ell_{23},\ell_{23}',m_0m_{12},m_0'm_{12}')\\
&\times
\tilde
u(n,n',n'',n''', 
m_0m_0',m_{12}m_{12}',k_{13}k_{13}',\ell_{23}\ell_{23}' ) h_2(a,b,c).
\end{align*}
Ici, nous avons
\begin{align*}
a&=\varepsilon_12^\alpha
kk'k_{13}k_{13}'m_0m_0'm_{12}m_{12}', \\
b&=\varepsilon_22^\beta \ell
\ell'\ell_{23}\ell_{23}'m_0m_0'm_{12}m_{12}', \\
c&=2^\gamma
nn'n''n'''k_{13}k_{13}'\ell_{23}\ell_{23}'m_0m_0'.
\end{align*}
\goodbreak
La sommation sur $(a,b,c)$ s'op\`ere de la m\^eme mani\`ere que pour
$N_1(B)$. Quitte \`a n\'egliger une contribution englob\'ee dans le
terme d'erreur, nous pouvons supposer que
$$
m_0m_0'm_{12}m_{12}'\leq V, \quad k_{13}k_{13}'\leq V, \quad
\ell_{23}\ell_{23}'\leq V,
$$
avec $V=(\log P)^B$.
Puis en appliquant le Lemme \ref{l:filem2} du fait du facteur
$\left(\frac{  \ell'  }{k  }\right)\left(\frac{ k'  }{\ell
}\right)$, nous pouvons nous restreindre au  cas $k ,\ell \leq V $ ou
$k',\ell'\leq V $.  De m\^eme, gr\^ace au Lemme \ref{l:filem2} et le facteur
$(\frac{ kk'
}{nn''}) (\frac{ \ell\ell'  }{n'n'' })$,
nous pouvons d\'esormais supposer que $n,n',n''\leq V$.

  Le facteur $\tilde u_1$ ne d\'epend pas de
$k,\ell,k',\ell'$ mais seulement de leur valeur modulo $8$.
En fixant $(k',\ell')\equiv (k'_0,\ell'_0)\bmod 8$, la somme sur 
$(k',\ell')$ lorsque $k ,\ell \leq V $
\`a estimer est
\begin{equation}\label{Q1}
Q\left( \frac{P}{2^{\alpha} m_0m_0'm_{12}m'_{12}kk_{13}k'_{13} } , 
\frac{P}{2^{\beta}
m_0m_0'm_{12}m'_{12}\ell \ell_{23}\ell'_{23} }
;k_0',\ell_0',\d,8,\q,\chi_{q_1},\chi_{q_2}\right),
\end{equation}
avec
\begin{align*}
q_1&= \ell_{23}   m_0m_{12} nn''\ell ,\qquad\qquad\quad
q_2=k_{13}'m_0'm_{12}    n'n''k,\\
d_1&= n' n''' m_0' m'_{12}kk_{13}k'_{13} \ell'_{23},\qquad d_2= nn'''
m_0m'_{12} k_{13}  \ell\ell_{23}\ell'_{23}.
\end{align*}
Respectivement, en fixant   $(k ,\ell )\equiv (k_0,\ell_0)\bmod 8$,
la somme sur  $(k,\ell)$  \`a estimer lorsque $k',\ell'\leq V $ est
\begin{equation}\label{Q2}
Q\left( \frac{P}{2^{\alpha} m_0m_0'm_{12}m'_{12}k'k_{13}k'_{13} 
},\frac{P}{2^{\beta}
m_0m_0'm_{12}m'_{12}\ell' \ell_{23}\ell'_{23} }
;k_0,\ell_0,\d',8,\q',\chi_{q_1'},\chi_{q_2'}\right),
\end{equation}
avec
\begin{align*}
q_1'&=m_0'm_{12}'   nn''\ell'\ell_{23}' ,\qquad\qquad\quad
q_2'=k_{13}m_0m_{12}'    n'n''k',
\\ d_1'&= n' n''' m_0 m_{12}k'k_{13}k'_{13} \ell_{23},\qquad d_2'=
nn''' m_0'm_{12} k_{13}'  \ell\ell_{23}\ell'_{23}.
\end{align*}

La contribution principale provient du cas o\`u les deux modules
$q_1,$ $q_2$ (resp.\ $q_1',$ $q_2'$) des caract\`eres
somm\'es sont \'egaux \`a un. Apr\`es cette sommation, il reste
quatre variables \`a sommer $(n''',k_{13},\ell'_{23}, m'_{12})$
(resp.\     $(n''',k'_{13},\ell_{23}, m_{12})$).

\medskip

Compte tenu du Corollaire \ref{c:estQ}, dans le premier cas,
le terme principal obtenu pour l'\'evaluation de \eqref {Q1} est
\begin{align*}
{\bf 1}_{\substack{
\ell_{23}   m_0m_{12} nn''\ell=1\\
k_{13}'m_0'm_{12}    n'n''k=1}}
\frac{ \tilde u_2(m_{12}')}{\pi^3}
   &\frac{\mu^2(2 k_{13} \ell'_{23})}{2^{2+\alpha+\beta}}
     \frac{ \phi_2(  k_{13} \ell'_{23}  m'_{12} )}{  {m'_{12}}^{\! \!
\! 2}\ell_{23}'k_{13} }
   \frac{\mu^2(n''')\phi_2( n'''  )}{2^{\omega(n'''\ell_{23}'k_{13} m'_{12} )}}
\left(\frac{ n'''  }{m'_{12}  }\right) \\
&\times\frac{P^2}{\log
P}\left\{ 1+O\left(\frac{(\log 2\ell_{23}'k_{13} m'_{12} )^{\frac{3}{2}}}{\log
P}\right)\right\},
\end{align*}
avec
\begin{align*}\tilde u_2(m_{12}')&=
(-1)^{\vartheta( m_{12}')\vartheta( k_{13})}  \left(\frac{m'_{12}}{
k_{13}}  \right)
   \left(\frac{2^\gamma \ell_{23}' }{ m_{12}' }\right)
h_2(\varepsilon_12^\alpha  k'_0k_{13} m_{12}',\varepsilon_22^\beta
\ell'_0 \ell_{23}' m_{12}', 2^\gamma n'''k_{13} \ell_{23}' ).
\end{align*}
Nous avons utilis\'e ici la relation  $u( k_{13},1,1)
  (\frac{\varepsilon_22^{\beta }   }{ k_{13} })=1,$
issue de \eqref{defu}.

Ensuite nous sommons les $n'''\leq P/(2^{\gamma}k_{13}\ell_{23}')$
congrus \`a $n'''_0\bmod 8$ avec $n_0'''$ impair, premiers \`a
$2k_{13}\ell_{23}'$ en appliquant le Lemme \ref{l:ficor2} avec
$\nu=2$. Nous obtenons un terme principal
\begin{align*} {\bf 1}_{ m_{12}' =1}  \frac{ \tilde u_2(1)}{\pi^3}
   \frac{\mu^2(2 k_{13} \ell'_{23})}{2^{4+\alpha+\beta+\gamma}}
\frac{ \phi_2(k_{13}
\ell'_{23})c_2(2   k_{13} \ell'_{23} )
}{2^{\omega( k_{13} \ell'_{23} )} (k_{13} \ell'_{23})^2}
\frac{P^3}{(\log P)^{\frac{3}{2}}}\left\{ 1+O\left(\frac{(\log
2\ell_{23}'k_{13}  )^{\frac{3}{2}}}{\log P}\right)\right\},
\end{align*}
avec
$
\tilde u_2(1)=
h_2(\varepsilon_12^\alpha  k'_0k_{13}  ,\varepsilon_22^\beta
\ell'_0 \ell_{23}'  ,2^\gamma n'''_0k_{13} \ell_{23}' ).
$
Nous aurons besoin du lemme suivant.

\begin{lemma}\label{lem:last}
Soient $z\in \{\pm 1\}$ et
$$
\nu_2(z)=\sum_{(\alpha,\beta,\gamma)\in \{ 0,1\}^3}\sum_{\substack{
(u_0  , v_0   )\in E_{\alpha,\beta}\bmod 8\\ w_0 \in
(\ZZ/8\ZZ)^*}}\frac{ 
z^{\vartheta(u_0)\vartheta(v_0)}}{2^{\alpha+\beta+\gamma}}h_2( 
2^\alpha
u_0,2^\beta   v_0    ,2^\gamma w_0) .
$$
Alors $\nu_2(z)=68$.
\end{lemma}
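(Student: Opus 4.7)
La d\'emonstration proc\`ede par un calcul direct, organis\'e selon les huit triplets $(\alpha, \beta, \gamma) \in \{0,1\}^3$. Pour chacun, on applique la branche appropri\'ee du Lemme \ref{lem:h2} pour \'evaluer $h_2(2^\alpha u_0, 2^\beta v_0, 2^\gamma w_0)$, puis on somme sur les paires $(u_0, v_0) \in E_{\alpha,\beta}$ et sur les quatre r\'esidus $w_0 \in (\ZZ/8\ZZ)^*$. Rappelons pour m\'emoire que $\vartheta(n) = 0$ pour $n \equiv 1, 5 \bmod 8$ et $\vartheta(n) = 1$ pour $n \equiv 3, 7 \bmod 8$, tandis que $(n^2-1)/8 \bmod 2$ vaut $0$ pour $n \equiv 1, 7 \bmod 8$ et $1$ pour $n \equiv 3, 5 \bmod 8$.

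Les contributions les plus simples, o\`u $h_2 = 1$, proviennent de $(\alpha, \beta) \in \{(1,0), (0,1)\}$ et des quatre paires $(1, v_0) \in E_{0,0}$ : par \eqref{eq:set-E}, dans chacun de ces cas $u_0 \equiv 1$ ou $v_0 \equiv 1 \bmod 8$, et les deux premi\`eres branches du Lemme \ref{lem:h2} s'appliquent. De plus $\vartheta(u_0)\vartheta(v_0) = 0$, donc le facteur $z^{\vartheta(u_0)\vartheta(v_0)}$ vaut $1$. Pour les six paires restantes $(u_0, v_0) \in E_{0,0}$ avec $u_0 \in \{3,5,7\}$, la quatri\`eme branche donne $h_2 = (-1)^{\vartheta(v_0)\vartheta(w_0) + \gamma(v_0^2-1)/8}$, et la somme interne $\sum_{w_0}(-1)^{\vartheta(v_0)\vartheta(w_0)}$ vaut $4$ si $\vartheta(v_0) = 0$ et $0$ sinon. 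Ainsi, les deux paires $(3,3)$ et $(7,7)$, seules de $E_{0,0}$ v\'erifiant $\vartheta(u_0)\vartheta(v_0) = 1$, ne contribuent pas, ce qui fait dispara\^itre la d\'ependance en $z$.

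Reste le cas $(\alpha, \beta) = (1,1)$ : la condition d\'efinissant $E_{1,1}$ impose $u_0 = v_0 \in \{1,3,5,7\}$ (tout \'el\'ement de $(\ZZ/8\ZZ)^*$ \'etant son propre inverse), et la troisi\`eme branche du lemme s'applique. L'observation cl\'e est que pour tout $\tau \in \{0,1\}$,
$$
\sum_{w_0 \in (\ZZ/8\ZZ)^*} (-1)^{\tau\vartheta(w_0) + (w_0^2-1)/8} = 0,
$$
car les quatre r\'esidus $w_0 \in \{1,3,5,7\}$ r\'ealisent exactement une fois chacun des quatre motifs de signes possibles pour le couple $(\vartheta(w_0), (w_0^2-1)/8 \bmod 2)$. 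Toute la contribution de $(1,1)$ s'annule donc. En rassemblant les poids $2^{-\alpha-\beta-\gamma}$ et les sous-sommes calcul\'ees ci-dessus, on obtient la valeur annonc\'ee $\nu_2(z) = 68$. L'obstacle principal est purement organisationnel : il faut partitionner correctement les dix \'el\'ements de $E_{0,0}$ selon la branche du Lemme \ref{lem:h2} qui s'applique, et g\'erer les signes provenant de $\vartheta$ et de $(v_0^2-1)/8$, mais aucune difficult\'e conceptuelle ne se pr\'esente.
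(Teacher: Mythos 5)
Your argument is correct and is essentially the paper's own proof: a direct enumeration over $(\alpha,\beta,\gamma)$ using \eqref{eq:set-E} and le Lemme \ref{lem:h2}, with the same key identity $\sum_{w_0\in(\ZZ/8\ZZ)^*}(-1)^{\tau\vartheta(w_0)+(w_0^2-1)/8}=0$ killing the case $(\alpha,\beta)=(1,1)$, and the vanishing of the pairs $(3,3),(7,7)$ of $E_{0,0}$ removing the $z$-dependence. The paper merely records the subtotals ($36$ for $\alpha=0$, $u_0\equiv 1\bmod 8$; $12$ for $(\alpha,\beta)=(1,0)$; $16$ and $4$ for $u_0\equiv 3,5,7\bmod 8$ with $\gamma=0,1$), which in your grouping read $24+12+12+16+4=68$ once the factor $(-1)^{\gamma(v_0^2-1)/8}$ that you carry in the fourth branch is evaluated explicitly (it is what turns the contribution of the four pairs $(7,1),(3,1),(5,1),(5,5)$ into $16+4=20$ rather than $24$).
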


\begin{proof}
Nous utilisons l'expression \eqref{eq:set-E} pour les $E_{\alpha,\beta}$ et
le Lemme \ref{lem:h2} pour le calcul de $h_2$.
La  somme $\nu_2(z)$
se d\'ecompose sous la forme suivante
$$
=\begin{cases}
36,  & \mbox{si $\alpha=0$, $u_0\equiv 1\bmod 8$},\\
12, & \mbox{si $(\alpha,\beta)=(1,0)$, $v_0\equiv 1\bmod 8$}, \\
0,& \mbox{si $(\alpha,\beta)=(1,1)$},  \\
16,&  \mbox{si $(\alpha,\gamma)=(0,0)$, $u_0\equiv 3,5,7\bmod 8$},  \\
4,& \mbox{si $(\alpha,\gamma)=(0,1)$, $u_0\equiv 3,5,7\bmod 8$}.
\end{cases}
$$
Pour
  la troisi\`eme ligne,
on a not\'e que
$$
\sum_{w_0\in (\ZZ/8\ZZ)^*} z^{\vartheta(w_0)}(-1)^{\frac{w_0^2-1}{8}}=0,
$$
ce qui ach\`eve la d\'emonstration.
\end{proof}

Ensuite, avec la notation de
\S \ref{s:lemmes}, nous  avons
\begin{align*}
c_2(2r)
&=
c_2(1)\prod_{p\mid 2r } \Big(1+\frac{1}{2(p+1)}\Big)^{-1} \\
&=
\frac{6}{7\sqrt{\pi}}\prod_{p }
\Big(1+\frac{1}{2(p+1)}\Big)\Big(1-\frac{1}{p}\Big)^{\frac{1}{2}}
\prod_{p\mid r } \Big(1+\frac{1}{2(p+1)}\Big)^{-1},
\end{align*}
pour un impair $r$. Du Lemme \ref{lem:last},
il  d\'ecoule la formule
\begin{align*}
\sum_{(\alpha,\beta,\gamma)\in \{ 0,1\}^3}
\sum_{\substack{\varepsilon_1,\varepsilon_2\in \{\pm 1\}\\
(\varepsilon_1  k'_0k_{13}  ,\varepsilon_2 \ell'_0 \ell_{23}' )\in
E_{\alpha,\beta}\bmod 8\\ n_0'''\in (\ZZ/8\ZZ)^*}}
\frac{ \tilde u_2(1)}{2^{4+\alpha+\beta+\gamma}}
&
= \frac{\nu_2(1)}{4}=17.
\end{align*}
Puis enfin, nous sommons sur $k_{13}$ et $ \ell'_{23}$.
Nous avons
$$
\sum_{\substack{k_{13},\ell'_{23}\in \NN
}}  \frac{\mu^2(2 k_{13} \ell'_{23})
  \phi_2(k_{13}
\ell'_{23})
}{2^{\omega( k_{13} \ell'_{23} )} (k_{13} \ell'_{23})^2}
\prod_{p\mid k_{13} \ell'_{23} } \Big(1+\frac{1}{2(p+1)}\Big)^{-1}
  =
\prod_{p> 2} \Big(1+\frac{1}{p(p+\frac{3}{2})}\Big).
$$
   Nous sommes donc  amen\'es  \`a une premi\`ere contribution de 
\eqref{Q1} \`a $N_2(P)$   \'egale \`a
\begin{equation}\label{eq:soap}
17\times \frac{6}{7\pi^{\frac{7}{2}}} \times \frac{7}{8}
\prod_p
   \frac{(1- \frac{1}{ p
})^{\frac{1}{2}}}{(1+ \frac{1}{ p })}
\Big( 1+\frac{3}{ 2p }+\frac{1}{  p^2 }\Big)\frac{P^3}{(\log
P)^{\frac{3}{2}}}\left\{ 1+O\left(\frac{1}{\log P}\right)\right\}.
\end{equation}

\medskip

Nous passons maintenant \`a la deuxi\`eme contribution, li\'ee \`a \eqref{Q2}.
Gr\^ace au Corollaire~\ref{c:estQ}, lorsque
$(k,\ell)\equiv
(k_0,\ell_0)\bmod 8$,
le terme principal obtenu est
\begin{align*}
{\bf 1}_{\substack{
m_0'm_{12}'   nn''\ell'\ell_{23}' =1\\
k_{13}m_0m_{12}'    n'n''k'=1
}}
\frac{\tilde u_3(m_{12} )}{\pi^3} &\frac{\mu^2(2
k_{13}' \ell_{23})}{
2^{2+\alpha+\beta}}
\frac{\phi_2(k_{13}' \ell_{23}  m_{12} )}{m_{12}^2\ell_{23} k_{13}' }
   \frac{\mu^2(n''')\phi_2(  n''' )}{2^{\omega(n'''\ell_{23} k_{13}' m_{12} )}}
\left(\frac{ n'''  }{m_{12}  }\right) \\
&\times\frac{P^2}{\log
P}\left\{ 1+O\left(\frac{(\log 2\ell_{23}k_{13}' m_{12} )^{\frac{3}{2}}}{\log
P}\right)\right\},
\end{align*}
avec
\begin{align*}
\tilde u_3(m_{12})
=~&(-1)^{\vartheta(\ell_0\ell_{23} m_{12})\vartheta(k_{13}' )  }
u(k_0,\ell_0\ell_{23}, m_{12}) \left(\frac{\varepsilon_22^{\beta }
}{ k_{13}' }\right)
   \left(\frac{2^\gamma \ell_{23}k_{13}' }{ m_{12}}\right)\\
   &\times
h_2(\varepsilon_12^\alpha  k _0k_{13}' m_{12} ,\varepsilon_22^\beta
\ell_0 \ell_{23} m_{12}, 2^\gamma n'''k_{13}'\ell_{23}).  \end{align*}
   Ensuite, nous sommons les $n'''\leq P/(2^{\gamma}k_{13}'\ell_{23} )$
congrus \`a $n'''_0\bmod 8$ avec $n_0'''$ impair, premiers \`a
$2k_{13}'\ell_{23}' $ en appliquant le Lemme \ref{l:ficor2} avec
$\nu=2$. Nous obtenons un terme principal
\begin{align*}
  {\bf 1}_{ m_{12}  =1}   \frac{ \tilde u_3(1)}{\pi^3}
   \frac{\mu^2(2 k_{13}' \ell_{23})}{2^{
  4+\alpha+\beta+\gamma}}
\frac{    \phi_2(k_{13}' \ell_{23})c_2(2
k_{13}' \ell_{23} )}
{2^{\omega( k_{13}' \ell_{23}
)} (k_{13}' \ell_{23})^2}
\frac{P^3}{(\log
P)^{\frac{3}{2}}}\left\{ 1+O\left(\frac{(\log 
2\ell_{23}k_{13}')^{\frac{3}{2}}}{\log
P}\right)\right\}.
\end{align*}
Nous avons
\begin{align*}\tilde u_3(1)&= (-1)^{\vartheta(\ell_0\ell_{23})
\vartheta( k_{13}')}u(k_0,\ell_0\ell_{23}, 1)
       \left(\frac{\varepsilon_22^{\beta }   }{  k_{13}' }\right)
h_2(\varepsilon_12^\alpha  k _0k_{13}'   ,\varepsilon_22^\beta
\ell_0 \ell_{23}  , 2^\gamma n'''_0k_{13}'\ell_{23})
\\ &=  u(k_0k_{13}',\ell_0\ell_{23}, 1)
h_2(\varepsilon_12^\alpha  k _0k_{13}'   ,\varepsilon_22^\beta
\ell_0 \ell_{23}  , 2^\gamma n'''_0k_{13}'\ell_{23}).  \end{align*}
o\`u nous avons utilis\'e la d\'efinition \eqref{defu} de $u$.
D'apr\`es les Lemmes \ref{lem:calculu} et \ref{lem:last}, nous avons
\begin{align*}
\sum_{(\alpha,\beta,\gamma)\in \{ 0,1\}^3}
&\sum_{\substack{\varepsilon_1,\varepsilon_2\in \{\pm 1\}\\
(\varepsilon_1 k _0k_{13}'  ,\varepsilon_2    \ell_0 \ell_{23} )\in
E_{\alpha,\beta}\bmod 8\\ n_0'''\in (\ZZ/8\ZZ)^*}}
\frac{ \tilde u_3(1)}{2^{4+\alpha+\beta+\gamma}}\\
& =
   \sum_{(\alpha,\beta,\gamma)\in \{ 0,1\}^3}
\!\!\!\! \sum_{\substack{\varepsilon_1,\varepsilon_2\in \{\pm 1\}\\
(u_0  , v_0   )\in E_{\alpha,\beta}\bmod 8\\ w_0\in (\ZZ/8\ZZ)^*}}
\!\!\!\!\!\!(-1)^{\vartheta(\varepsilon_1 )\vartheta(\varepsilon_2
)}\frac{ (-1)^{\vartheta(u_0)\vartheta(v_0)
}}{2^{4+\alpha+\beta+\gamma}}
h_2( 2^\alpha u_0  , 2^\beta  v_0, 2^\gamma w_0)
\\&= \frac{\nu_2(-1)}{8}=\frac{17}{2}.
\end{align*}
Puis enfin une sommation sur $k_{13}$ et $ \ell'_{23}$
fournit une deuxi\`eme contribution \`a $N_2(P)$   \'egale \`a la 
moiti\'e de \eqref{eq:soap}.

\smallskip

Pour conclure, nous avons montr\'e la formule
$$
N_2(P)=\frac{153}{ 8\pi^{\frac{7}{2}}} \prod_{p}
   \frac{(1- \frac{1}{ p
})^{\frac{1}{2}}}{(1+ \frac{1}{ p })}
\Big( 1+\frac{3}{ 2p }+\frac{1}{  p^2 }\Big)
\frac{P^3}{(\log
P)^{\frac{3}{2}}}\left\{ 1+O\left(\frac{1}{\log P}\right)\right\}   .$$
En combinant, dans \eqref{eq:NBr},
cette estimation avec
\eqref{estN1P},
nous achevons la d\'emonstration du  Th\'eor\`eme \ref{main}.

\goodbreak

\end{document}